
\documentclass{amsart}
\usepackage{amsfonts}
\usepackage{amsmath}
\usepackage{amssymb}
\usepackage[utf8]{inputenc}
\usepackage{hyperref}
\usepackage{color}

\usepackage{lipsum}

\setcounter{MaxMatrixCols}{10}
\setcounter{tocdepth}{1}



\usepackage{graphicx}

\def\C{\mathbb{C}}
\def\R{\mathbb{R}}

\def\Z{\mathbb{Z}}

\def\F{\mathcal{F}}

\def\F{\mathcal {F}}

\def\V{\mathcal {V}}


\def\T{\mathbb{T}}


\def\Isom{\sf{Isom}}

\def\Aff{\sf{Aff}}
\def\Sim{\mathsf{Sim}}

\def\st{\mathsf{St}}

\def\Eins{\mathsf{Eins}}

\def\NN{\mathcal N}

\def\TT{\mathcal T}

\def\E{\mathcal E}

\def\G{\mathcal G}

\def\UU{\mathcal U}


\def\Aut{{\sf{Aut}}}

\def\O{{\sf{O}}}
\def\SO{{\sf{SO}}}

\def\SL{{\sf{SL}}}

\def\Iso{{\sf{Iso}} }

\def\det{{\sf{det}}}






\newtheorem{theorem}{{Theorem}}[section]
\newtheorem{proposition}[theorem]{{Proposition}}
\newtheorem{isom.ext}[theorem]{{Trivial isometric extension}}
\newtheorem{definition}[theorem]{{Definition}}
\newtheorem{lemma}[theorem]{{Lemma}}
\newtheorem{remark}[theorem]{{Remark}}



\definecolor{purple}{rgb}{0.65,0.12,0.94}
\definecolor{forestgreen}{rgb}{0.4,0.64,0.13}

\hypersetup{
	colorlinks=true,
	linkcolor=red,
	filecolor=magenta,      
	urlcolor=black,
}

\textheight 21cm

\begin{document}
\title{On foliations admitting a transverse similarity structure}

\author [B. Flamencourt]{Brice Flamencourt}
\address{UMPA, CNRS, 
	\'Ecole Normale Sup\'erieure de Lyon, France}
\email{brice.flamencourt@ens-lyon.fr}

\author[A. Zeghib]{Abdelghani Zeghib }
\address{UMPA, CNRS, 
	\'Ecole Normale Sup\'erieure de Lyon, France}
\email{abdelghani.zeghib@ens-lyon.fr 
	\hfill\break\indent
	\url{http://www.umpa.ens-lyon.fr/~zeghib/}}

\date{\today}
\maketitle

 \begin{abstract}
 
We give a ``conceptual'' approach to Kourganoff's results about foliations with a transverse similarity structure. In particular, we give a proof, understandable by the targeted community, of the very important result classifying the holonomy of the closed, non-exact Weyl structures on compact manifolds, from which arose the notion of locally conformally product structures. We also extract from the proof several results on foliations admitting locally metric transverse connections.
 
 \end{abstract}

 \tableofcontents
 
 \section{Introduction}

Foliations on manifolds are studied using the structures carried by their transversals, where a transversal is a manifold which is at each point transverse to the foliation and which intersects each one of the leaves. However, foliations admitting a global connected transversal are quite rare, and a way to overcome this difficulty is to find a natural identification between local transversals meeting a same leaf. This is done using the so-called holonomy pseudo-group, which consists of germs of diffeomorphisms of the transversal obtained by sliding along leaves following a pre-defined path. Once we have this identification, the $G$-structures (i.e. the reductions of the frame bundle) carried by the transversal which are holonomy-invariant are of great help to understand the underlying geometry of the foliation.

Among these structures, the Riemannian ones, defining what is called a Riemannian foliation, are probably the best understood. A very detailed presentation of this particular case can be found in the book of Molino \cite{Mol}. A slightly more general case is the one of transverse similarity structures, where the Riemannian metric of the transversal is defined, only locally, up to a homothety and is preserved by the holonomy pseudo-group only up to a positive multiplicative constant, i.e. this group acts by similarities (which are often called homotheties). These structures have been studied by Kourganoff \cite{Kou}, who proved that the foliation is then Riemannian unless it is transversally flat. However, the analysis carried out by Kourganoff involved highly technical tools which have not been fully understood by the targeted public. The aim of this article is to provide a much more conceptual proof, avoiding technicalities while going further concerning the results that can be inferred.

Although we are essentially following the same broad lines, the whole idea of this new proof is to linearize the holonomy. Indeed, the holonomy pseudo-group can be understood infinitesimally as the parallel transport along leaves according to a special class of connections, called Bott connections. Yet, this is not sufficient in general to get the full picture. Being able to linearize the holonomy exactly means that the diffeomorphisms of the holonomy pseudo-group are completely determined by their one-jet at a point. In particular, we can analyze the geometry of the foliation by looking at its normal bundle endowed with the Bott connection induced by the transverse similarity structure. This linearization is done using a Haefliger structure \cite{Hea, LM}, which is more general than a foliation and corresponds to a foliation on a neighbourhood of the zero-section of the normal bundle of the initial foliation. The holonomy pseudo-group of this new foliation is then equivalent to the initial holonomy pseudo-group, but we are now in presence of a (locally)  foliated bundle, which is easier to understand. The last ingredient of this analysis is then the existence of a holonomy-invariant transverse connection, which allows to linearize the holonomy of the Haefliger structure.

The first motivation to study transverse similarity structures in \cite{Kou} was the so-called Belgun-Moroianu conjecture, formulated in \cite{BM}. This conjecture concerns conformal geometry, and more specifically Weyl connections on compact conformal manifolds. A Weyl connection is a generalization of the notion of Levi-Civita connection to the conformal case: it is a torsion-free connection which preserves the conformal class. The Weyl structure is said to be closed when it is locally the Levi-Civita connection of a metric in the conformal class, and exact when this property holds globally. The Belgun-Moroianu conjecture stated that a closed, non-exact Weyl connection on a compact conformal manifold is flat or has irreducible reduced holonomy. However, this was disproved by a counter-example constructed by Matveev and Nikolayevsky \cite{MN15}, who showed additionally that, in the analytic case, when the holonomy is reducible and non-flat, the universal cover of the compact manifold has a natural structure of a Riemannian product when endowed with a Riemannian metric canonically induced by the Weyl connection \cite{MN17}.

The work of Kourganoff in \cite{Kou} allowed to extend this theorem to the smooth case and has been the starting point of the study of Locally Conformally Product (LCP) structures (see for example \cite{ABM, BFM, FlaLCP,MP24}). For this reason, the comprehension of this result is really significant for the authors working on LCP structures, but, as we explained above, the technicalities of foliation theory and certain proofs left to the reader in Kourganoff's text have been an obstacle to the proper spread of this knowledge. For this reason, we try here to have a more invariant approach to the problem, using for example the tools introduced in the book of Molino \cite{Mol} in order to see transverse geometry as the examination of the normal bundle of the the foliation. Yet, when speaking of the holonomy pseudo-group, some issues arise, since, as we already discussed, the holonomy is not linear. The intervention of the Haefliger structures is then a way to avoid a choice of a particular complete transversal, which led to technicalities in the original proof.

The organization of the paper goes as follow. In Section~\ref{SectPre} we introduce the notions needed for the analysis of foliations and we state the main results of the paper about foliations and de Rham decomposition of manifolds admitting a locally metric connection. Sections~\ref{SectHea} and \ref{SectCon} are devoted to the linearization of the holonomy in our particular setting by use of a Haefliger structure and well-chosen coordinates for the transversals of the foliation. In Section~\ref{SectSim} we look at the equicontinuity domain of a foliation endowed with a similarity structure, and we adapt an example given in \cite{BMT} showing that the results we obtain here cannot be generalized when we only have an holonomy-invariant transverse connection. The equicontinuity is the key to prove that a transverse similarity structure can be transformed into a transverse Riemannian structure. Section~\ref{SectProofs} contains the proofs of the main theorems about foliations. We discuss in Section~\ref{SectWeyl} an application of the previous results, reproving the striking result of \cite[Theorem 1.5]{Kou} about the holonomy decomposition of compact manifolds with a closed, non-exact Weyl structure. Finally, we show in Section~\ref{FoliationTorii}  that one can, up to some geometrical constructions, assume that an LCP manifold fibers over a compact manifold with the typical fiber being a (flat) torus.

\section{Definitions and results} \label{SectPre}

In all this text, $M$ is a connected manifold endowed with a foliation $\F$ of codimension $q$. Our goal is to study the case where $M$ is compact and the foliation $\F$ admits a holonomy-invariant transversal Riemannian similarity structure $[g]$, that is $[g]$ is a 
class up to homothety of Riemannian metrics, defined locally on each transversal, and invariant under any holonomy transformation of $\F$. However, we do not assume that $M$ is compact or that $\F$ carries a particular structure for the moment, and we will add new assumptions throughout the text.

We recall that the holonomy pseudo-group of the foliation $\F$ is the pseudo-group of diffeomorphisms of local quotient manifolds of the foliation defined by sliding along leaves. More precisely, if $x, y \in M$ are in the same leaf of $\F$ and $c : [0,1] \to M$ is a path from $x$ to $y$ staying in the same leaf, $c$ defines an element of the holonomy pseudo-group by choosing two local transversals $T_x$ and $T_y$ at $x$ and $y$ respectively (i.e. $T_x$ and $T_y$ are manifolds which are everywhere transverse to the foliations and $x \in T_x$ and $y \in T_y$). If $T_x $ and $T_y$ are small enough, one can divide $c$ into sub-paths which are each contained in the domain of a foliated chart of $(M, \F)$. In each of these domains, the sub-path of $c$ is canonically lifted to all the leaves of the domain and sliding along the leaves just means following these lifts. Sliding the points of $T_x$ until we reach $T_y$ defines a local diffeomorphism from $T_x$ to $T_y$. This does not depend on the chosen sequence of foliated charts we used. A more detailed discussion about the holonomy pseudo-group can be found in \cite{Mol}.
 
 \subsection{Foliations with transverse holonomy-invariant connection}

A transverse connection $\nabla$ on $M$ is a linear connection on the normal bundle $N \F := TM / T\F$ of the foliation $\F$, such that for any vector fields $(X, \bar Y) \in T \F \times N \F$, $\nabla_X \bar Y$ coincides with the projection of $[X,Y]$ on $N \F$, where $Y \in TM$ is any representative of $\bar Y$. It means that the tangential part of the connection is already given. Such connections are also called {\em Bott connections}. This induces a connection on any transversal of $\F$ in a natural way.

This connection is {\em holonomy-invariant} if for any $x,y \in M$, any transversals $S_x$ and $S_y$ at $x$ and $y$ respectively and for any holonomy map $\gamma$ sending $S_x$ to $S_y$ (up to a restriction of the definition sets), one has $\nabla \vert_{S_x} = \gamma^* (\nabla \vert_{S_y})$. This amounts to saying that $\nabla$ is projectable, i.e. it projects to a connection on the local quotient manifolds of the foliation (see \cite[Lemma 2.3]{Mol}).

\begin{remark}
Note that the torsion of a transverse connection is well-defined because one can construct a {\em transverse fundamental form} $\theta_T$ from the transverse frame bundle $\mathrm{Fr}(N \F)$ to $\R^q$, also called a {\em solder form}, defined at the point $p \in \mathrm{Fr}(N \F)$ by:
\begin{align}
(\theta_T(X))_p := p^{-1} (\pi_*(X)_p) && \forall X \in T\mathrm{Fr}(N \F),
\end{align}
where $p$ is seen as a map from $\R^q$ to $N \F$ and $\pi$ is the canonical projection $\mathrm{Fr}(N \F) \to M$. The torsion is the covariant exterior derivative of $\theta_T$.
\end{remark}

\begin{definition}
We consider any Bott connection. This induces a connection of the principal bundle $\mathrm{Fr}(N \F)$. The distribution on $\mathrm{Fr}(N \F)$ containing the horizontal tangent vectors which project into $T \F$ is involutive and it induces a foliation called the {\em lifted foliation}. This definition does not depend on the chosen connection (because the connection is already given along $T \F$).
\end{definition}

The lifted foliation is right-invariant by the principal group action of the frame bundle and has the same dimension as the original foliation. It can also be defined as the distribution containing all the vectors $X \in T \mathrm{Fr}(N \F)$ such that $\iota_X \theta_T = 0 = \iota_X d \theta_T$, where $\theta_T$ is the fundamental transverse form. A detailed discussion can be found in \cite{Mol}.

\begin{definition}
A transverse $G$-structure on $(M, \F)$ is a $G$-reduction, say $P_G$, of $\mathrm{Fr}(N \F)$ which is invariant by the lifted foliation (i.e. the lifted foliation is tangent to $P_G$).
\end{definition}

We call a {\em transverse metric} on $(M, \F)$ any Riemannian bundle metric on the normal bundle $N \F \to M$ of $\F$. A transverse Riemannian metric $g_T$ on $(M, \F)$ is said to be holonomy-invariant if, when one denotes by $g$ the degenerate metric on $TM$ given by the composition of the projection $TM \times TM \to N \F \times N \F$ together with $g_T$, then $\mathcal L_X g = 0$ for any $X \in \Gamma(T \F)$. Endowed with such a structure, it exists a unique torsion-free transverse connection which preserves $g_T$, called the {\em transverse Levi-Civita connection of $g_T$}. This connection is projectable, i.e. holonomy-invariant. Equivalently, a transverse holonomy-invariant metric is a transverse $O(q)$-structure.

Since Riemannian holonomy-invariant metrics are significant structures on foliations, they deserve a name:

\begin{definition}
A Foliation admitting a transverse holonomy-invariant Riemannian structure is called a {\em Riemannian foliation}.
\end{definition}

\begin{remark}
The informed reader who knows the classical book of Molino \cite{Mol} should be aware of a small difference we make in the vocabulary used here. Indeed, what we call a transverse holonomy-invariant metric is what Molino simply names a transverse metric. The reason we make this difference is that we may subsequently talk about non-holonomy-invariant objects.
\end{remark}

We introduce a particular class of connections on manifolds:

\begin{definition} \label{locmetric}
A linear connection $\nabla$ on a manifold is said to be {\em locally metric} if its torsion vanishes and its reduced holonomy group $\mathrm{Hol}_0(\nabla)$ is compact.
\end{definition}

We have an equivalent notion for transverse connections:

\begin{definition} \label{transverselocmetric}
A transverse connection is said to be {\em locally metric} if its torsion vanishes and its reduced holonomy group $\mathrm{Hol}_0(\nabla)$ is compact.
\end{definition}

\begin{remark} \label{transverse irrlocallymetric}
A transverse locally metric connection induces a Riemannian bundle metric on the pull-back of the normal bundle $N \F \to M$ to the universal cover $\tilde M$ of $M$ (which is actually the normal bundle $T \tilde M/ T \tilde \F \to \tilde M$ of the pulled-back foliation $\tilde \F$). Indeed, Since $\mathrm{Hol}_0(\nabla)$ is compact, it is conjugated to a compact subgroup of the orthonormal group $O(q)$ where $q$ is the codimension of $\F$. Consequently, denoting by $\tilde \nabla$ the lift of the connection, the $\tilde \nabla$-holonomy bundle of an arbitrary point in the frame bundle $\mathrm{Fr}(T \tilde M/ T \tilde \F)$ is a reduction of the structure group included in $O(q)$ and it is invariant by the $\tilde \nabla$-holonomy. This implies $\tilde \nabla$ preserves an $O(q)$-subbundle of $\textrm{Fr}(T \tilde M/T \tilde \F)$, and it is also a transverse connection. In addition, this subbundle is invariant by the lifted connection. Altogether, this means that $\nabla$ is the Levi-Civita connection of a transverse Riemannian metric $g_T$. If we assume moreover that this connection is irreducible, since any element $\gamma$ of $\pi_1(M)$ act on $(\tilde M, \tilde \F, \tilde \nabla)$ by transverse affine transformations, then $\gamma^* g_T = \lambda g_T$ for some $\lambda > 0$, and $\pi_1(M)$ acts on $(\tilde M, \tilde \F, g_T)$ by transverse similarities.

The transverse metric then defined on $T \tilde M/ T \tilde \F$ is always holonomy-invariant by definition of a transverse connection.

Of course, the same applies in the easier situation where we consider a (non-transverse) locally metric connection $\nabla$, i.e. there exists a metric $h$ on the universal cover $\tilde M$ of $M$ such that the lifted connection $\tilde \nabla$ is the Levi-Civita connection of $h$.
\end{remark}

We recall the definition of a de Rham decomposition:

\begin{definition}
A de Rham decomposition of a Riemannian manifold $(M,g)$ is a product of manifolds $(M_0, g_0) \times \ldots \times (M_p, g_p)$ isometric to $M$ such that $(M_0, g_0)$ is flat and the other factors are irreducible.
\end{definition}

A direct consequence of the study of transverse similarity structures we will carry out is the following general result:
 
 \begin{theorem}  \label{No Flat} Let $\F$ be a foliation with a transverse holonomy-invariant locally metric connection on a compact manifold. If the transversal connection has no
 flat factor in its local de Rham decomposition, then $\F$ is a Riemannian foliation.
 
 \end{theorem}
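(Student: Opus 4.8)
The plan is to pass to the universal cover, reformulate the problem via the transverse Levi-Civita connection furnished by Remark~\ref{transverse irrlocallymetric}, split off the de Rham factors, and then show --- this being the real content --- that compactness of $M$ forces the holonomy to act by isometries rather than by genuine similarities. So, first I would lift everything to the universal cover $\tilde M$, with pulled-back foliation $\tilde\F$ and lifted transverse connection $\tilde\nabla$. Since $\nabla$ is locally metric, Remark~\ref{transverse irrlocallymetric} provides a transverse Riemannian metric $g_T$ on $N\tilde\F = T\tilde M/T\tilde\F$ whose transverse Levi-Civita connection is $\tilde\nabla$, and the deck group $\pi_1(M)$ acts on $(\tilde M,\tilde\F,\tilde\nabla)$ by transverse affine transformations. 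As $\mathrm{Hol}_0(\nabla)$ is compact, its action on $\R^q$ is completely reducible, so $g_T$ admits a de Rham decomposition $N\tilde\F = E_1\oplus\cdots\oplus E_k$, $g_T = g_1\oplus\cdots\oplus g_k$, with the $E_i$ being $\tilde\nabla$-parallel and $g_T$-orthogonal. The hypothesis that the transverse connection has no flat factor in its local de Rham decomposition means precisely that each $(E_i,g_i)$ is irreducible and non-flat (so in particular $\dim E_i\geq 2$).

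This parallel orthogonal splitting is canonically attached to $\tilde\nabla$ (up to reordering, and up to regrouping of mutually isometric factors), hence the affine action of $\pi_1(M)$ permutes the $E_i$. Let $\Gamma_0\leq\pi_1(M)$ be the finite-index subgroup preserving each $E_i$. For $\gamma\in\Gamma_0$, the metric $\gamma^*g_i$ is again $\tilde\nabla|_{E_i}$-parallel, and since the holonomy acts irreducibly on $E_i$ the parallel symmetric $2$-tensors on $E_i$ form a one-dimensional space; hence $\gamma^*g_i = \lambda_i(\gamma)\,g_i$ with $\lambda_i\colon\Gamma_0\to\R_{>0}$ a homomorphism. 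Equivalently, $\nabla$ induces on each factor a transverse similarity structure on which $\Gamma_0$ acts by transverse similarities with scaling characters $\lambda_i$.

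The crux is to prove that every $\lambda_i$ is trivial, and this is exactly where compactness of $M$ enters, through the analysis of the equicontinuity domain of a foliation with a transverse similarity structure carried out in Section~\ref{SectSim}. The idea is that a non-trivial value $\lambda_i(\gamma)\neq 1$ exhibits $\gamma$ as a genuine contraction (or dilation) transverse to the $E_i$-direction on a compact manifold; feeding this into the equicontinuity argument forces the transverse structure along $E_i$ to be flat, contradicting non-flatness of that factor. (Put differently, modulo this reduction Theorem~\ref{No Flat} is the contrapositive of the dichotomy ``a transverse similarity structure on a compact manifold is Riemannian unless it is transversally flat'', applied to each irreducible factor.) I expect this to be the main obstacle; the remaining steps are formal.

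Finally, granting $\lambda_i\equiv 1$ for all $i$, I would rescale each $g_i$ by a suitable positive constant inside its $\pi_1(M)$-orbit of factors so that the permutation action of $\pi_1(M)$ becomes isometric. The resulting transverse metric on $N\tilde\F$ is then $\pi_1(M)$-invariant and $\tilde\nabla$-parallel, so it descends to $(M,\F)$ as a transverse holonomy-invariant Riemannian metric; by definition $\F$ is a Riemannian foliation.
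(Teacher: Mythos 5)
Your setup (lift to $\tilde M$, the $\tilde\nabla$-parallel orthogonal splitting $N\tilde\F=E_1\oplus\cdots\oplus E_k$, permutation of the factors by $\pi_1(M)$, scaling characters $\lambda_i$) agrees with the beginning of the paper's argument, but the step you yourself call the crux --- proving $\lambda_i\equiv 1$ --- is not only left unproved, it is false, and the theorem does not need it. ``$\F$ is a Riemannian foliation'' only asks for \emph{some} transverse metric invariant under the holonomy pseudo-group of $\F$; it does not ask that $\pi_1(M)$ act by transverse isometries, nor that the canonical parallel metric descend. Concretely, let $S$ be a compact simply connected Riemannian manifold whose cone $C(S)=(\R_{>0}\times S,\ dr^2+r^2g_S)$ is irreducible (true for generic $S$), and let $M=C(S)/\langle(r,x)\mapsto(\lambda r,x)\rangle$ with $\lambda>1$, carrying the induced closed non-exact Weyl connection; foliate $S^1\times M$ by the circle fibers and take the pulled-back connection as transverse connection. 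The holonomy pseudo-group is trivial, so the connection is holonomy-invariant, its transverse de Rham decomposition has a single irreducible non-flat factor, and the foliation is trivially Riemannian --- yet the character of the deck action is nontrivial. The underlying error is a conflation of two different groups: the dichotomy of Sections~\ref{SectSim}--\ref{SectProofs} (Propositions~\ref{non-flat curvature} and~\ref{Closed}, Theorem~\ref{Equi.Riem.Theorem}) concerns the \emph{holonomy pseudo-group} of the foliation, whereas a deck transformation of ratio $\neq 1$ is not a holonomy transformation and creates no contraction in the holonomy, so it cannot be fed into the equicontinuity argument to produce the contradiction you describe.

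For comparison, the paper's route (Section~\ref{deRhamsection}) keeps the nontrivial characters and never tries to kill them: for each factor one first checks that the preimage in $TM$ of $T\F$ plus the sum of the \emph{other} factors is involutive, so that $E_i$ becomes the full normal bundle of a saturated foliation $\F'$ on the compact manifold $M$; the parallel metric upstairs then induces on $(M,\F')$ a holonomy-invariant transverse \emph{similarity} structure (the possible nontriviality of the characters is exactly why one only gets a similarity structure downstairs, and it is harmless). Since this structure is non-flat, Theorem~\ref{Similarity} (through Propositions~\ref{non-flat curvature}, \ref{Closed} and the Molino-theoretic Theorem~\ref{Equi.Riem.Theorem}) gives that $\F'$ is Riemannian, i.e.\ some holonomy-invariant metric on $E_i$, in general unrelated to $g_i$ and not $\nabla$-parallel; summing over $i$ yields the transverse metric for $\F$. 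So your instinct to reduce to the similarity dichotomy is right, but it must be applied to the saturated foliations (your proposal also omits the involutivity step needed to form them), and its output is an abstract invariant metric rather than the descent of $g_T$; the final rescaling-and-descent step of your plan should be discarded.
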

 
In the general case, consider a transversal $\tau$, and its de Rham decomposition $\tau = \tau^0 \times \tau^*$,  where $\tau^0$
 corresponds to the flat factor, and $\tau^*$ corresponds to all the other factors (we will precise what me mean exactly by this decomposition in Section~\ref{deRhamsection}). By taking the inverse images by the local submersions defining $\F$ of 
 $\tau^0$ and $\tau^*$, one gets foliations $\F^0$ and $\F^*$, with  transversal holonomy-invariant metric connections modeled on 
 $\tau^*$ and $\tau^0$ respectively.  For instance, $\F^0$ is obtained by saturating $\F$ with $\tau^0$, and similarly for 
 $\F^*$, thus $\F$ appears as the intersection of $\F^0$ and $\F^*$. 
 
 Observe that the previous theorem applies to $\F^0$, implying:
 
 \begin{theorem}  \label{With Flat} Let $\F$ be a foliation on a compact manifold together with a transverse-holonomy-invariant metric connection, and let $\F^0$ be the saturation of 
 $\F$ by the flat factor of the de Rham decomposition of the transversal connection. Then $\F^0$ is a Riemannian foliation.
 
 \end{theorem}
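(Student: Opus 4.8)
The plan is to derive Theorem~\ref{With Flat} as an immediate consequence of Theorem~\ref{No Flat} applied to the foliation $\F^0$. Since $\F^0$ lives on the same compact manifold $M$, everything reduces to producing on $\F^0$ a transverse holonomy-invariant locally metric connection whose local de Rham decomposition has no flat factor, and then invoking Theorem~\ref{No Flat}. (Throughout, ``metric connection'' in the statement is read as ``locally metric connection'' in the sense of Definition~\ref{transverselocmetric}; a genuine metric transverse connection is in particular locally metric, so nothing is lost.)

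The first step is to set up the local de Rham decomposition of the given transverse locally metric connection $\nabla$, as announced for Section~\ref{deRhamsection}. Because $\mathrm{Hol}_0(\nabla)$ is a compact subgroup of $\GL(q,\R)$, its linear action on the model fibre $\R^q$ splits canonically as $\R^q = V^0 \oplus V^*$, where $V^0$ is the maximal subspace fixed by $\mathrm{Hol}_0(\nabla)$ and $V^*$ is the sum of the non-trivial isotypic components. One checks that, since the full holonomy normalizes $\mathrm{Hol}_0(\nabla)$, this splitting is preserved by the entire holonomy pseudo-group of $\F$; it therefore defines a decomposition of the normal bundle $N\F$ over $M$ into $\nabla$-parallel sub-bundles, which, restricted to a small enough transversal $\tau$, are parallel and hence integrable, giving a local product $\tau = \tau^0 \times \tau^*$ with $\tau^0$ flat and $\tau^*$ carrying only the non-trivial irreducible factors. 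Pulling the $\tau^0$-slices $\tau^0 \times \{b\}$ back through the submersions defining $\F$ now yields a genuine foliation $\F^0$ of $M$, each leaf of $\F$ being contained in a leaf of $\F^0$, admitting $\tau^*$ as a transversal; let $\nabla^*$ be the transverse connection induced on $\F^0$. (Symmetrically one gets a foliation $\F^*$ with transversal $\tau^0$, and $\F = \F^0 \cap \F^*$, but $\F^*$ is not needed here.)

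It remains to verify that $(M,\F^0,\nabla^*)$ satisfies the hypotheses of Theorem~\ref{No Flat}. The ambient manifold is unchanged, hence compact. The connection $\nabla^*$ is the direct summand of $\nabla$ corresponding to the parallel sub-bundle attached to $V^*$, so it is torsion-free --- as one reads off the transverse solder form, in the spirit of the remark following the transverse fundamental form --- and holonomy-invariant for $\F^0$, inherited from $\nabla$ together with the invariance of the splitting established above. Its reduced holonomy is the product of the non-trivial irreducible de Rham factors of $\mathrm{Hol}_0(\nabla)$, a closed subgroup of a compact group, hence compact; so $\nabla^*$ is locally metric. Finally, by construction the local de Rham decomposition of $\nabla^*$ consists exactly of irreducible non-flat factors, so $\nabla^*$ has no flat factor. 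Theorem~\ref{No Flat} applied to $\F^0$ then gives that $\F^0$ is a Riemannian foliation.

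The main obstacle is the second step: one has to make sure that the ``local de Rham decomposition'' of a transverse connection --- a priori only a parallel splitting of $N\F$ together with an integrability statement on each transversal --- really assembles into a bona fide foliation $\F^0$ of $M$. This requires (i) integrability of the flat and non-flat sub-bundles along transversals, (ii) invariance of the splitting under the whole holonomy pseudo-group, and (iii) the fact that ``being the flat factor'' is a local, holonomy-stable notion, so that the decomposition is compatible with restriction of transversals. These are exactly the points handled in Section~\ref{deRhamsection}; once they are in place, the deduction of Theorem~\ref{With Flat} from Theorem~\ref{No Flat} is formal.
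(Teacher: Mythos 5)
Your proposal is correct and follows essentially the same route as the paper: saturate $\F$ by the canonical (holonomy-stable) flat sub-bundle of $N\F$ to get $\F^0$, check that the induced transverse connection on $\F^0$ is holonomy-invariant, locally metric and has no flat factor, and invoke Theorem~\ref{No Flat}. The only cosmetic difference is that you obtain integrability of the saturating distribution from the local de Rham product structure on transversals, while the paper verifies involutivity directly by a torsion-free computation ($[X,Y]=\nabla_X Y-\nabla_Y X+Z$ with $Z\in T\F$); both are equally valid.
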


 \subsection{Foliations with a transverse holonomy-invariant similarity structure}

In the course of our analysis, we will have to deal with closures of leaves of the foliation $\F$. However, such a closure is not necessarily a manifold, since there could be a subset where the leaves accumulate. In order to overcome this technical difficulty, we consider the more general concept of {\em lamination}. A lamination on a topological space $X$ is a collection of charts $(U_i, \varphi_i)$, where $U_i$ is a covering of $X$, such that the maps $\varphi_i$ are homeomorphisms from $U_i$ to $V_i \times X_i$, with $V_i$ a subset of an Euclidean space and $X_i$ a subspace of $X$, and the transition maps $\varphi_i \circ \varphi_j^{-1}$ preserve the Euclidean factor.

If the topological space $X$ admits a distance $d$, we define the bi-equicontinuity (or we could say bi-lipschitz) domain of the lamination as the set of points $x \in X$ such that there exists $\delta(x) > 0$, for any holonomy map $\gamma$ from a transversal $T_x$ of $x$ to a transversal $T_y$ at a point $y \in X$, for any $z_1, z_2 \in T_x$,
\begin{equation} \label{lamequi}
\frac{1}{\delta (x)} d(z_1,z_2) \le d(\gamma(z_1), \gamma(z_2)) \le \delta(x) d(z_1,z_2).
\end{equation}
This set is more often called the equicontinuity domain, but we use this terminology in order to avoid confusion in the sequel, adopting the convention that {\em bi-equicontinuity} means that we have both the lower and the upper bound, while {\em equicontinuity} means we only have the upper bound. 
We now define precisely the structure we will be interested in, namely a transverse holonomy-invariant (Riemannian) similarity structure.

\begin{definition} \label{defSimStruct}
A transverse similarity structure on the foliated manifold $(M, \F)$ is a maximal open covering $\{ U_i \}_{i \in I}$ of $M$ together with, for any $i \in I$, a set of homothetic metrics $G_i := \{\lambda g_i, \ \lambda > 0\}$ where $g_i$ is a transverse Riemannian metric on $(U_i, \F\vert_{U_i})$ with the property that for any $i,j \in I$, $G_i\vert_{U_i \cap U_j} = G_j\vert_{U_i \cap U_j}$. This transverse similarity structure is holonomy-invariant if the sets $G_i$ are preserved by the holonomy pseudo-group, i.e. for any holonomy map $\gamma$ defined from a connected transversal $T^i \subset U_i$ to a connected transversal $T^j \subset U_j$, $\gamma^* (g_j \vert_{T^j}) = \lambda g_i \vert_{T^i}$ for some $\lambda \in \R$.
\end{definition}

A holonomy-invariant transverse similarity structure induces a natural holonomy-invariant transverse connection. Indeed, on a sufficiently small open foliated domain on which one can choose a globally defined holonomy-invariant transverse metric of the similarity class, and the transverse Levi-Civita connection of this metric is actually independent of this choice. Consequently, if one defines a connection in such a way around each point, we just remark that these connections coincide on the intersections of the open sets because of the compatibility condition of Definition~\ref{defSimStruct}.

The main goal of our study of foliations is the following structure theorem:
 
 \begin{theorem} \label{Similarity} Let $\F$ be a foliation on a compact manifold, endowed with a holonomy-invariant similarity structure. If $\F$  contains a closed invariant subset where it is an equicontinuous lamination, then $\F$ is a Riemannian foliation. 
 
 If $\F$ is not Riemannian, then it is transversely flat, i.e. transversely modelled on $ (\Sim(\R^q),  \R^q)$.

 \end{theorem}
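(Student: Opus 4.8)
The plan is to extract all of the transverse geometry from a single connection, reduce to a dichotomy, and bring in the equicontinuity hypothesis only at the end.

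\emph{Step 0: the induced connection is locally metric.} A holonomy\-invariant transverse similarity structure $[g]$ carries the canonical holonomy\-invariant Bott connection $\nabla$, the transverse Levi--Civita connection of any local representative, as already observed. The first point is that $\nabla$ is locally metric in the sense of Definition~\ref{transverselocmetric}: it is torsion\-free, and, being \emph{locally} the transverse Levi--Civita connection of a genuine transverse metric, its curvature lies at each point in the bundle of endomorphisms skew\-symmetric for the \emph{globally} defined conformal class $[g]$; since $\nabla$\-parallel transport acts by $[g]$\-similarities, the Ambrose--Singer theorem gives $\mathrm{Hol}_0(\nabla)\subset\SO(q)$, which is compact. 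Hence $\F$ falls under the hypotheses of Theorems~\ref{No Flat} and~\ref{With Flat}.

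\emph{Step 1: the dichotomy.} Consider the de Rham splitting $\tau=\tau^0\times\tau^*$ of the transversal, $\tau^0$ the flat factor. By Theorem~\ref{With Flat} the saturation $\F^0$ of $\F$ by $\tau^0$ is a Riemannian foliation, so $\tau^*$ carries a holonomy\-invariant transverse metric $g^*$. Because the splitting $T\tau=T\tau^0\oplus T\tau^*$ is $\nabla$\-parallel and $\nabla$ is holonomy\-invariant, every holonomy map respects the local product and has the form $\gamma=(\gamma_0,\gamma_*)$ with $\gamma_*$ a $g^*$\-isometry. If $\tau^*\neq\{*\}$, I would normalize the local representatives of $[g]$ so that their $\tau^*$\-components all equal $g^*$; the normalized metrics then glue to a \emph{global} transverse metric of the shape $g^*\oplus(\text{const})\,g_{\mathrm{flat}}$, and the similarity\-invariance of $[g]$ together with $(\gamma_*)^*g^*=g^*$ forces every $\gamma$ to preserve it. So $\F$ is Riemannian as soon as $\nabla$ is not flat. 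This already gives the second assertion: if $\F$ is not Riemannian then $\tau=\tau^0$, i.e. $\nabla$ is flat, and since the transverse holonomy still takes values in $\R^q\rtimes\mathrm{CO}(q)=\Sim(\R^q)$ the foliation is transversely modelled on $(\Sim(\R^q),\R^q)$.

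\emph{Step 2: equicontinuity forces Riemannian.} Assume now $\F$ has a closed invariant subset $K$ on which it is an equicontinuous lamination, and suppose for contradiction that $\F$ is not Riemannian. By Step~1, $\F$ is transversely flat, and via the linearization of the holonomy by a Haefliger structure and $\nabla$\-normal coordinates (the content of the preceding sections) it is transversely homogeneous: there is a $\rho$\-equivariant developing submersion $D\colon\tilde M\to\R^q$ with the leaves as fibres and a holonomy morphism $\rho\colon\pi_1(M)\to\Sim(\R^q)$, and $\F$ is Riemannian exactly when the dilation homomorphism $\lambda:=|\cdot|\circ\rho\colon\pi_1(M)\to\R_{>0}$ is trivial. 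Pick a minimal set $K_0\subset K$; in the relevant case $K_0$ is not a single leaf, so its transverse trace $\widehat K_0:=D(\pi^{-1}(K_0))\subset\R^q$ is a perfect, closed, $\rho(\pi_1)$\-invariant set. Equicontinuity of $\F|_{K_0}$ amounts to equicontinuity of the $\rho(\pi_1)$\-action on $\widehat K_0$; but each $\rho(\gamma)$ is a \emph{global} similarity with factor $\lambda(\gamma)$, so, $\widehat K_0$ being perfect, the equicontinuity constant at a single point of $\widehat K_0$ bounds $\lambda(\gamma)$ for \emph{every} $\gamma\in\pi_1(M)$. A bounded subgroup of $\R_{>0}$ is trivial, hence $\lambda\equiv1$, $\rho(\pi_1)\subset\Isom(\R^q)$, the flat metric descends to a holonomy\-invariant transverse metric, and $\F$ is Riemannian -- a contradiction. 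So $\F$ is Riemannian, which is the first assertion.

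\emph{Main obstacle.} Step~0 is routine and Step~1 is bookkeeping with the parallel de Rham splitting and the normalization of local metrics; the delicate point is Step~2. It requires (i) upgrading the abstract statement ``transversely modelled on $(\Sim(\R^q),\R^q)$'' to the concrete developing picture $D,\rho$ -- exactly what the linearization of the holonomy is designed for -- and (ii) the careful identification of the lamination\-theoretic equicontinuity of $\F|_{K_0}$ with a uniform bound on the dilation cocycle over all of $\pi_1(M)$. The crux of (ii) is that a nontrivial minimal set has a perfect transverse trace, so that its invariance under \emph{all} of $\rho(\pi_1)$ turns a pointwise Lipschitz estimate into a global one; the degenerate possibility that $K_0$ is a single closed leaf (transverse trace a point) must be excluded or treated separately, and compactness of $M$ is what supplies $K_0$ to begin with.
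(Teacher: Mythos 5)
Your proposal has a structural problem: it is circular with respect to the logical architecture of this paper, and its one genuinely new step contains a gap. In the paper, Theorems~\ref{No Flat} and~\ref{With Flat} are not independent inputs: their proofs (Section~\ref{deRhamsection}) construct a non-flat holonomy-invariant transverse similarity structure on each irreducible factor and then invoke precisely the similarity dichotomy you are asked to prove (the citation ``apply Theorem~\ref{No Flat}'' there is to the Similarity result). So your Steps~0--1, which take \ref{No Flat} and \ref{With Flat} as black boxes, assume the conclusion. The actual content of Theorem~\ref{Similarity} is the chain you never reproduce or replace: openness of the bi-equicontinuity domain $\E$ via the Haefliger linearization of the holonomy (Proposition~\ref{Propagation}), the inclusion of the non-flat locus $\{w\neq 0\}$ in $\E$ (Proposition~\ref{non-flat curvature}), the open--closed dichotomy $\E=\emptyset$ or $\E=M$ (Proposition~\ref{Closed}), and ``equicontinuous $+$ holonomy-invariant connection $\Rightarrow$ Riemannian'' (Theorem~\ref{Equi.Riem.Theorem}, via Molino's theory of transversally parallelizable foliations). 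A secondary issue in Step~1: the metric $g^*$ furnished by \ref{With Flat} is obtained by averaging in Molino's framework and is not a priori locally homothetic to the $\tau^*$-part of the local representatives $g_i$, so ``normalize so that the $\tau^*$-components equal $g^*$'' needs an extra argument (e.g. rescaling by the ratio of volume forms); this is repairable, unlike the circularity.

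Step~2, the only part not resting on \ref{No Flat}/\ref{With Flat}, does not work as written. Equicontinuity of the lamination $\F|_{K_0}$ bounds only the holonomy maps obtained by sliding along leafwise paths inside $K_0$; read through the developing map these give a sub-pseudo-group generated by the images of the leafwise loops, not the action of all of $\rho(\pi_1(M))$ on $\widehat K_0$. Invariance of $\widehat K_0$ under the full group, and its perfectness, say nothing about the dilation factors $\lambda(\gamma)$ of deck transformations that are not realized by holonomy along leaves of $K_0$, so the claimed bound ``for every $\gamma\in\pi_1(M)$'' does not follow; the degenerate case where $K_0$ is a single closed leaf is acknowledged but not treated, and it is exactly the case where your transverse trace carries no information. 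The paper sidesteps all of this: it needs only one point of the bi-equicontinuity domain, propagates it to all of $M$ by the open--closed argument of Propositions~\ref{Propagation} and~\ref{Closed} (which uses the similarity structure through the singular metric $w\,g$, not a global developing map or a dilation homomorphism), and then concludes with Theorem~\ref{Equi.Riem.Theorem}; the flatness statement is the contrapositive of Proposition~\ref{non-flat curvature}. To salvage your outline you would have to prove the dichotomy of Step~1 directly (essentially redoing Propositions~\ref{non-flat curvature} and~\ref{Closed}) and replace Step~2 by an argument that upgrades equicontinuity of the holonomy pseudo-group of $\F$ near $K_0$ --- not of the abstract $\pi_1$-action --- into membership in $\E$.
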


\begin{remark}
Observe that we are making use above of a ``metric'' equicontinuity notion rather than a ``topological''  one,  to mean that we have here Lipschitz estimates instead of just a rough uniform modulus of continuity.  This  was to simplify exposition, and  also because of equivalance of these equicontinuity concepts in our framework of  foliations endowed with a transverse holonomy-invariant  connection,    and   again as we will see it later in Section \ref{Equi.Riem.},   this is equivalent to be Riemannian (in the classical sense). In the general case,  variants of ``topological Riemannian'' foliations were relatively recently introduced   and studied  from the point of view of their leaf closures and their  relationships with the classical ``smooth Riemannian'' foliations. The history started  with questions asked by E. Ghys \cite[Appendix E]{Mol}, and some answers by Kellum \cite{Kel}. As more recent references, we can quote: \cite{AC,ALC,ALMG}. 
\end{remark}

\bigskip

\begin{remark}
Theorem  \ref{No Flat}  says that among foliations with a transverse holonomy-invariant locally metric connection, only the transversely flat case is  relevant in the sense that it  may have  a strong non-Riemannian dynamics. Here flat means the foliation is transversally modelled on $(\Aff(\R^q), \R^q)$.  This is a completely  open  research domain, even in the dimension 0  case, that is that of compact locally flat  manifolds, for which there is the huge classification conjectures by Marcus and Auslander.   A more tamed situation is that of foliations with  of  transvere flat similarity  structure, i.e. those  modelled on $ (\Sim(\R^q),  \R^q)$. The dimension 0 case is classified by Fried \cite{Frie}.  In the non-trivial codimension cases, most investigations concern the cases $q= 1$, that is $(\Aff(\R), \R)$ and $q = 2$, which actually reduces (up to orientation) to the 1-dimensional complex situation $(\Aff(\C), \C)$.  Let us quote the following works around this problem: \cite{Bar, Blu79, Ghys91, IMT, Mat, Plan, Sca, Simic}
\end{remark}

 \subsection{De Rham decomposition} \label{de Rham}

The final step of our analysis is to prove the existence of a de Rham decomposition on the universal cover $\tilde M$ of a compact manifold $M$ admitting a locally metric connection $\nabla$.

The main issue is that the classical de Rham theorem does not apply since this metric is not complete in general. With these notations we have: 

 \begin{theorem} [De Rham decomposition] \label{De Rham decomposition}
 Let $M$ be a compact conformal manifold together with a locally metric connection $\nabla$. Then, the Riemannian manifold $(\tilde M, h)$ admits a de Rham decomposition, where $h$ is any metric for which $\tilde \nabla$ is the Levi-Civita connection of $\tilde \nabla$ on $\tilde M$. Furthermore, the flat  factor is complete.
 \end{theorem}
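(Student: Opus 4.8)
The plan is to reduce the statement to Theorems~\ref{No Flat}--\ref{With Flat} by means of the holonomy decomposition of $\tilde\nabla$, and then to upgrade the \emph{local} de Rham splitting of $(\tilde M,h)$ to a global one; the real work is the completeness of the flat factor. First I would record the holonomy picture. Since $\tilde M$ is simply connected, the holonomy group of $\tilde\nabla$ equals its reduced holonomy $\mathrm{Hol}_0(\nabla)$, which is compact and connected, hence contained in an orthogonal group after rescaling $h$. Splitting the holonomy representation on $T_{\tilde x_0}\tilde M\cong\R^n$ into its maximal fixed submodule $V_0$ and the non-trivial irreducible submodules $V_1,\dots,V_p$ --- connectedness forcing $\dim V_i\ge 2$ for $i\ge 1$ --- and parallel-transporting over $\tilde M$, one obtains $\tilde\nabla$-parallel, mutually $h$-orthogonal distributions $D_0,D_1,\dots,D_p$ with $T\tilde M=\bigoplus_i D_i$; being parallel for a torsion-free connection, each is integrable with $h$-totally geodesic leaves, and $D_0$ is flat. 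By the local de Rham theorem, around every point $h$ is isometric to the product of the induced metrics on the leaves of the corresponding foliations $\tilde\F_i$.

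Next I would descend to $M$ and invoke Theorem~\ref{No Flat}. The coarse splitting $T\tilde M=D_0\oplus D^*$, $D^*:=\bigoplus_{i\ge 1}D_i$, depends only on $\tilde\nabla$: $D_0$ is the span of the $\tilde\nabla$-parallel vector fields, and $D^*$ is its unique $\tilde\nabla$-parallel complement (there is no nonzero equivariant map from the non-trivial part to the trivial one). It is therefore invariant under the affine action of $\pi_1(M)$ on $(\tilde M,\tilde\nabla)$ and descends to a $\nabla$-parallel splitting $TM=\bar D_0\oplus\bar D^*$. Let $\F^0$ be the foliation of $M$ tangent to $\bar D_0$. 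Since $\bar D_0$ is $\nabla$-parallel, $\nabla$ induces a torsion-free Bott connection on $N\F^0\cong\bar D^*$ whose reduced holonomy is the restriction of $\mathrm{Hol}_0(\nabla)$ to $\bar D^*$, hence compact and \emph{without flat factor}. Applying Theorem~\ref{No Flat} to $\F^0$ with this transverse connection --- which is exactly how Theorem~\ref{With Flat} is obtained --- gives that $\F^0$ is a Riemannian foliation; in particular its holonomy pseudo-group is equicontinuous (indeed isometric for a bundle-like transverse metric), and the leaves of the lifted foliation $\tilde\F^0$ are $h$-totally geodesic.

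The crux is to prove that every leaf $\tilde L_0$ of $\tilde\F^0$ is \emph{complete} for its induced flat metric, equivalently that every unit-speed $h$-geodesic $\gamma$ with $\gamma'(0)\in D_0$ extends to all of $\R$. If such a $\gamma$ were maximal on $[0,b)$ with $b<\infty$, it would leave every compact subset of $\tilde M$, while its projection $\bar\gamma$ to the compact $M$ --- a curve of finite $h$-length everywhere tangent to $\F^0$ --- would accumulate somewhere; thus $\gamma$ would run off to infinity in $\tilde M$ over a convergent sequence of base points, so the deck transformations realizing the escape would have unbounded distortion. This is where compactness of $M$ enters decisively: the holonomy of $\F^0$ along $\bar\gamma$ is equicontinuous by Theorem~\ref{No Flat}, so it cannot stretch the $h$-flat normal directions without bound, which is precisely what a finite-length escape along a flat leaf would demand. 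Making this rigorous is the main obstacle; it relies on the equicontinuity analysis of Section~\ref{SectSim} together with the compactness of $M$. The same mechanism, applied to the saturations attached to the factors $D_i$ with $i\ge 1$ and combined with the similarity action of $\pi_1(M)$ recalled in Remark~\ref{transverse irrlocallymetric} (compare Fried's study of closed similarity manifolds \cite{Frie}), yields completeness of the leaves of the other foliations $\tilde\F_i$ as well.

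Finally I would globalize the splitting. Fixing $\tilde x_0$, the leaf $\tilde L_0$ through it is a complete, flat, simply connected Riemannian manifold, hence isometric to $\R^k$ with its Euclidean metric, $k=\dim D_0$. Sliding the leaf $N$ of $\tilde\F^*$ through $\tilde x_0$ along $\tilde L_0$ by means of the local product charts --- a construction that is globally well defined precisely because $\R^k$ is complete --- produces a local isometry $\R^k\times N\to\tilde M$, which the monodromy principle (using $\pi_1(\tilde M)=1$) promotes to a global isometry $(\tilde M,h)\cong\R^k\times(N,g_N)$. Running the same argument inside $N$, which is simply connected and carries the irreducible parallel distributions $D_1,\dots,D_p$ with complete leaves, decomposes $(N,g_N)$ into irreducible factors, and altogether $(\tilde M,h)\cong\R^k\times(N_1,g_1)\times\dots\times(N_p,g_p)$ is a de Rham decomposition whose flat factor $\R^k$ is complete, as claimed.
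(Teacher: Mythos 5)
Your opening steps (splitting $T\tilde M$ under the holonomy of $\tilde\nabla$, descending the splitting to a finite cover of $M$, and applying Theorem~\ref{No Flat}/\ref{With Flat} to the foliation tangent to the flat distribution to get a Riemannian-foliation structure transverse to it) do match the paper's argument. The genuine gap is exactly where you flag ``the main obstacle'': the completeness of the flat leaves of $(\tilde M,h)$ is never proved, and the mechanism you sketch cannot prove it. The equicontinuity furnished by Theorem~\ref{No Flat} concerns the transverse holonomy acting on the normal bundle $\bar D^*$ of $\F^0$; it gives no control on the behaviour of $h$ \emph{along} the flat leaves. In the very situations the theorem is aimed at (the LCP case of Section~\ref{SectWeyl}), the deck transformations act on the flat directions by strict similarities with unbounded ratios, so ``bounded distortion of the deck transformations'' is precisely what fails and cannot yield your contradiction. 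Worse, your claim that ``the same mechanism'' gives completeness of the leaves of the irreducible foliations $\tilde\F_i$ with respect to $h$ is false in general: in the LCP case the irreducible factor $(N,g_N)$ is typically incomplete (this is why the Cauchy-border argument of Section~\ref{SectWeyl} is needed at all). Since your globalization step --- sliding $N$ along the complete flat leaf, which is itself an unproved by-hand version of Theorem~\ref{PR} --- rests on these two completeness claims, the proposal does not go through as written.

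The paper is organized precisely to avoid needing any completeness with respect to $h$. The holonomy-invariant transverse metrics $g_i$ produced by Theorem~\ref{No Flat}, one for each irreducible factor, are defined on the compact manifold $M$, so their lifts $\tilde g_i$ are complete along the corresponding leaves of $\tilde M$; one then replaces $h$ by the auxiliary metric $h'=h\vert_{T_0}\oplus\tilde g_1\oplus\ldots\oplus\tilde g_m$, applies the Ponge--Reckziegel theorem (Theorem~\ref{PR}) with the completeness hypothesis carried by the \emph{non-flat} side, decomposes that complete, simply connected factor by the classical de Rham theorem, and only at the end transports the manifold-level product decomposition back to $h$. To salvage your route you would either have to give an actual proof that the flat leaves are $h$-complete (a substantive point, not a consequence of equicontinuity), or take the paper's detour through the modified metric $h'$ and the splitting theorem of \cite{PR93}.
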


\subsection{LCP manifolds and foliation by torii} In the last two sections of this paper, we investigate we apply the previous results to compact manifolds admitting a closed non-exact Weyl structure, which essentially means that they are endowed with a similarity structure (see Section~\ref{SectWeyl} for the formal definition). We give an alternative proof of the following theorem, originally proved in \cite{Kou}:

\begin{theorem}[Kourganoff] \label{UCoverStructure}
Let $(M, c, D)$ be a compact conformal manifold endowed with a closed non-exact Weyl structure. Let $\tilde M$ be the universal cover of $M$ and let $h$ be the metric (unique up to a positive multiplicative constant) whose Levi-Civita connection is the lift of the Weyl connection. Then one of the following cases occurs:
\begin{itemize}
\item $(\tilde M, h)$ is flat;
\item $(\tilde M, h)$ is irreducible;
\item $(\tilde M, h)$ is isometric to $\R^q \times (N, g_N)$ where $q \ge 1$ and $(N, g_N)$ is an irreducible Riemannian manifold.
\end{itemize}
\end{theorem}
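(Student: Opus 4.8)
The plan is to reduce Theorem~\ref{UCoverStructure} to the foliation-theoretic statements already in hand, namely Theorems~\ref{Similarity}, \ref{No Flat} and \ref{De Rham decomposition}, by passing from the closed non-exact Weyl structure to a canonical foliation carrying a transverse holonomy-invariant similarity structure. First I would make precise the dictionary alluded to in Section~\ref{SectWeyl}: a closed non-exact Weyl connection $D$ on $(M,c)$ is locally the Levi-Civita connection of a metric in the conformal class, and on the universal cover $\tilde M$ it is \emph{globally} the Levi-Civita connection of a metric $h$, unique up to a positive constant; the deck group $\pi_1(M)$ then acts on $(\tilde M,h)$ by similarities, with the non-exactness meaning that it does not act by isometries. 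In other words $\nabla := D$ is a locally metric connection on the compact manifold $M$ in the sense of Definition~\ref{locmetric}, since torsion-freeness is built into the Weyl condition and the reduced holonomy $\mathrm{Hol}_0(\nabla) = \mathrm{Hol}(h) \subset O(n)$ is automatically compact. This already places us squarely in the setting of Theorem~\ref{De Rham decomposition}.

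Next I would apply Theorem~\ref{De Rham decomposition} to $(M,\nabla)$: the Riemannian manifold $(\tilde M, h)$ admits a de Rham decomposition $(\tilde M, h) \cong (M_0,g_0) \times (M_1,g_1) \times \dots \times (M_p,g_p)$ with $(M_0,g_0)$ flat and complete and the remaining factors irreducible. If $p = 0$ we are in the first (flat) case of the theorem; if $M_0$ is a point and $p = 1$ we are in the second (irreducible) case. The content is therefore concentrated in the mixed case, where $M_0$ is a nontrivial flat factor, say of dimension $q \geq 1$, and $M^* := M_1 \times \dots \times M_p$ is a nonempty product of irreducible factors. The remaining task is to show that in this case $M_0$ is flat \emph{Euclidean space} $\R^q$ (not merely a complete flat manifold, which a priori could be a cylinder or a flat torus or more general quotient) and that $M^*$ is in fact \emph{a single} irreducible factor $(N,g_N)$, so that $(\tilde M,h)$ is isometric to $\R^q \times (N,g_N)$ with the stated properties.

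To complete both of these I would exploit the $\pi_1(M)$-action and the compactness of $M$ together with the foliation-theoretic machinery. The de Rham decomposition is canonical, so $\pi_1(M)$ permutes its factors and, after passing to a finite-index subgroup, preserves the splitting $\tilde M = M_0 \times M^*$; since the group acts by similarities and the only factor admitting a nontrivial homothety is the flat one (an irreducible Riemannian factor of dimension $\ge 2$ has no nonisometric homothety by the de Rham holonomy argument, and an irreducible factor of dimension $1$ would itself be flat and absorbed into $M_0$), the homothety factor is realized entirely on $M_0$. This is exactly the situation in Remark~\ref{transverse irrlocallymetric}: the foliation $\F^*$ obtained by saturating the point-foliation with $M_0$ — equivalently, the foliation of $\tilde M$ by the slices $\{x\}\times M^*$, descended to $M$ — carries a transverse holonomy-invariant similarity structure modeled on the flat transversal $M_0$, with the homothety acting nontrivially; by Theorem~\ref{Similarity} this foliation must be transversely flat, forcing $M_0 \cong \R^q$. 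Symmetrically, the saturation $\F^0$ by the flat factor is a Riemannian foliation by Theorem~\ref{With Flat} (or Theorem~\ref{No Flat} applied to $\F^0$), and compactness of $M$ combined with the existence of a nontrivial homothety in the transverse direction forces the transverse manifold $M^*$ to carry no further flat factor and to be irreducible: any decomposition $M^* = M_1 \times M^{**}$ with both factors nontrivial would give a $\pi_1$-invariant (after finite index) parallel distribution on a compact Riemannian foliation that is incompatible with the homothety acting along $\R^q$ — one shows the transverse holonomy of $\F^0$ would then have to be reducible \emph{and} Riemannian, and a counting/volume argument using compactness of $M$ rules this out unless $M^*$ is irreducible. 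The main obstacle, and the step demanding the most care, is precisely this last point: controlling how the homothety and the product structure interact, i.e. showing that the similarity factor "lives only on $\R^q$" and that the complementary part is a single irreducible factor rather than a nontrivial product — this is where compactness of $M$ is essential and where the equicontinuity input of Theorem~\ref{Similarity} does the real work, via the dichotomy "Riemannian or transversely flat."
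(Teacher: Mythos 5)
Your reduction of the statement to the Weyl--to--locally-metric dictionary plus Theorem~\ref{De Rham decomposition} is exactly the paper's starting point, and the identification of the remaining task (the non-flat part must be a \emph{single} irreducible factor) is correct. But the way you propose to finish contains a genuine error and a genuine gap. The error: you claim that ``the only factor admitting a nontrivial homothety is the flat one,'' so that the homothety is realized entirely on $M_0$. This is false for incomplete factors, and incompleteness is precisely the situation here: a similarity of ratio $\lambda\neq 1$ of a Riemannian product which preserves the splitting scales \emph{every} factor by the same ratio $\lambda$, so the projection of $\pi_1(M)$ to $\Sim(N)$ (with $N=M_1\times\dots\times M_p$) contains non-isometric similarities. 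The statement ``irreducible $\Rightarrow$ no non-isometric similarity'' is a theorem about \emph{complete} manifolds (where a non-isometric similarity forces flat $\R^n$); an incomplete irreducible manifold can perfectly well carry non-isometric similarities, and in the LCP case it does. Since your strategy for the irreducibility of $M^*$ rests on the homothety ``living only on $\R^q$,'' it collapses at this point; the appeal to Theorem~\ref{Similarity} and to an unspecified ``counting/volume argument'' is not a proof, and there is in fact no contradiction between $\F^0$ being a Riemannian foliation (which is exactly Theorem~\ref{With Flat}) and the transversal being reducible — the contradiction has to come from elsewhere.

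The paper's actual argument for the key lemma is metric, not foliation-theoretic: assume $N=N_1\times N_2$ with both factors of positive dimension, take a non-isometric similarity $\gamma=(\gamma_1,\gamma_2)$ in the projection of $\pi_1(M)$ (after a power, preserving the splitting). If some $N_i$ were complete, $\gamma_i$ would have a fixed point and $N_i$ would be isometric to $\R^{n_i}$, contradicting the absence of a flat factor in $N$; so both $N_i$ are incomplete. Then one uses cocompactness of the projected $\pi_1(M)$-action on $N$ together with the Cauchy boundaries $\partial N_i$: distances to $\partial N_i$ are scaled by the (common) ratio of any similarity, and choosing a point whose distances to $\partial N_1$ and $\partial N_2$ are respectively much smaller and much larger than those attainable from a fixed compact fundamental piece forces the ratio of the element bringing it back to be simultaneously $\le 1/2$ and $\ge 2$. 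That scaling-versus-cocompactness argument is the missing idea in your proposal. Finally, your detour through Theorem~\ref{Similarity} to get $M_0\cong\R^q$ is unnecessary and not justified as stated: $M_0$ is flat, \emph{complete} (this is part of Theorem~\ref{De Rham decomposition}) and simply connected (as a de Rham factor of the simply connected $\tilde M$), hence isometric to $\R^q$ directly.
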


In the last case of Theorem~\ref{UCoverStructure}, $(M,c,D)$ is called an LCP structure. The foliation induced by the submersion $\tilde M \to N$ descends to a foliation on $M$. It was proved in \cite{Kou} that this natural foliation has leaves' closure finitely covered by torii. We go further in this direction, proving that, up some transformation, the closure of the leaves induces a fibration whose fibers are (flat) torii. More precisely:

\begin{theorem} \label{FoliationToriiThm}
Let $(M, c, D)$ be an LCP structure. Then, up to passing to  an $\SO(k)$-principal bundle over $M$, $k > 2$, inheriting an $\SO(k)$-invariant LCP structure, the closures of the leaves of the foliation defined by the flat factor are (flat) torii. More precisely, these torii are given by a (regular) fibration of $M$ over a compact manifold.
\end{theorem}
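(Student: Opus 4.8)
We analyze the theorem statement. We have an LCP manifold $(M, c, D)$: compact, with a closed non-exact Weyl connection $D$, whose lift to $\tilde M$ is Levi-Civita of a metric $h$ with $(\tilde M, h) \cong \R^q \times (N, g_N)$, $N$ irreducible. The fundamental group $\pi_1(M)$ acts on $\tilde M$ preserving the product decomposition (this is standard for LCP: isometries preserve the de Rham splitting, and since $N$ is irreducible the $\R^q$ factor is a characteristic sub-factor), acting on $\R^q$ by similarities (with nontrivial linear/homothety parts, otherwise the Weyl structure would be exact) and on $N$ by isometries. The foliation $\mathcal{F}_0$ defined by the flat factor on $M$ is the projection of the foliation $\{\R^q \times \{n\}\}_{n \in N}$; its leaf closures are known from Kourganoff to be finitely covered by tori. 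Let me think about what I'd actually want to prove and how.

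The key structural input I would use is the description of $\Gamma := \pi_1(M)$ as a subgroup of $\mathrm{Sim}(\R^q) \times \mathrm{Isom}(N, g_N)$ with the two projections, $\rho_1 \colon \Gamma \to \mathrm{Sim}(\R^q)$ and $\rho_2 \colon \Gamma \to \mathrm{Isom}(N)$. Write $\mathrm{Sim}(\R^q) = (\R^q \rtimes \mathrm{O}(q)) \rtimes \R_{>0}$ and let $\mu \colon \Gamma \to \R_{>0}$ be the homothety-ratio character; non-exactness says $\mu$ is nontrivial, and since $M$ is compact $\mathrm{Im}(\mu)$ is a discrete (hence cyclic) subgroup of $\R_{>0}$ — this is a standard compactness argument. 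Let $\Gamma_0 = \ker \mu$, the subgroup acting by \emph{isometries} on $\R^q$. The leaf-closure foliation corresponds, roughly, to the orbit closures of $\Gamma_0$ acting on $\R^q$ (the $N$-direction being already compact via $\rho_2$ after passing to a cover, or rather the relevant dynamics is that of $\rho_1(\Gamma_0) < \mathrm{Isom}(\R^q)$ together with the translation lattice it generates). By Bieberbach-type arguments, $\rho_1(\Gamma_0)$ has a finite-index subgroup whose translation part spans a rational subspace $V \subset \R^q$, and the leaf closures project to cosets of (a subtorus coming from) $V$. The \textbf{main obstacle} is that the rotational parts $\mathrm{O}(q)$-components of $\rho_1(\Gamma)$ can prevent the leaf-closure partition from being a fibration: the closure of a leaf is $\R^q$-translated by a subspace $V$ and then rotated, and these rotated subspaces can be non-parallel, so the partition has varying "slopes" and is only a lamination, not a fibration.

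This is exactly what the $\SO(k)$-bundle trick is for: I would \emph{kill the rotational holonomy} by passing to a principal $\SO(k)$-bundle. Concretely: the obstruction to the leaf closures forming a fibration is measured by the image of $\Gamma$ (or $\Gamma_0$) in $\mathrm{O}(q)$ under the linear part, more precisely in the normalizer-quotient governing how $V$ sits inside $\R^q$; this is a compact group, and by embedding it in some $\SO(k)$ and forming the associated principal bundle $\hat M \to M$ with structure group $\SO(k)$ (equivalently, taking $\hat M = \tilde M \times_\Gamma \SO(k)$ for a suitable homomorphism $\Gamma \to \SO(k)$ built from the rotational parts), the pulled-back Weyl/LCP structure on $\hat M$ has the property that its rotational holonomy in the $\R^q$-factor becomes trivial modulo the compact fiber — so on $\hat M$ the flat-direction foliation has leaf closures that are genuine parallel subtori, varying continuously and locally trivially over a base. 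I would check: (i) $\hat M$ is again compact and carries an $\SO(k)$-invariant LCP structure (the Weyl connection and conformal class pull back, $\R^q \times N$ gets replaced by $\R^q \times (N \times \SO(k))$ or one absorbs $\SO(k)$ into the isometric factor — need $k>2$ so $\SO(k)$ is semisimple and contributes no new flat factor, keeping it LCP); (ii) on $\hat M$, the group $\hat\Gamma_0$ now acts on $\R^q$ by \emph{translations} (the rotational parts having been trivialized into the added compact factor), so $\rho_1(\hat\Gamma_0)$ is a finitely generated abelian group of translations, its orbit closures are cosets of a single fixed subspace $V \subset \R^q$, and the quotient $\R^q / \overline{\rho_1(\hat\Gamma_0)}$-data assembles into a smooth fibration $\hat M \to B$ over a compact manifold $B$ with fiber the flat torus $V / (\text{lattice})$.

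For the write-up I would organize it as: \textbf{Step 1}, recall the $\Gamma < \mathrm{Sim}(\R^q) \times \mathrm{Isom}(N)$ picture and the discreteness of $\mathrm{Im}(\mu)$, isolating $\Gamma_0 = \ker\mu$ and its linear part $L \colon \Gamma_0 \to \mathrm{O}(q)$; \textbf{Step 2}, using compactness of $M$ and a Bieberbach/Zassenhaus argument, find a finite-index subgroup $\Gamma_0'$ with unipotent-free (here: pure translation after accounting for $L$) structure and identify the "flat subspace" $V$ and the obstruction cocycle valued in the compact group $\overline{L(\Gamma)} \le \mathrm{O}(q)$; \textbf{Step 3}, construct the $\SO(k)$-bundle $\hat M$ from a faithful representation $\overline{L(\Gamma)} \hookrightarrow \SO(k)$ (enlarging $k$ if needed, $k>2$), verify it is compact with an induced $\SO(k)$-invariant LCP structure by checking the lifted Weyl connection still has the right holonomy/splitting; \textbf{Step 4}, on $\hat M$ show $\rho_1(\hat\Gamma_0)$ is translational, deduce the leaf closures of the flat foliation are parallel flat subtori of constant dimension; \textbf{Step 5}, assemble these into a submersion $\hat M \to B$ onto a compact manifold by quotienting out the (now closed, since orbits are closed) flat-foliation-closure equivalence relation, checking it is a proper submersion hence a fiber bundle with flat-torus fibers, invoking Theorem \ref{UCoverStructure} and the earlier leaf-closure results of \cite{Kou} where needed. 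The genuinely hard step is \textbf{Step 3} — verifying that the $\SO(k)$-extension is compatible with the LCP/Weyl structure and genuinely trivializes the rotational obstruction without introducing a new flat de Rham factor (which is why $k>2$ is required) — everything else is a routine, if careful, unwinding of the product structure and Bieberbach theory.
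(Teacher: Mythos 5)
Your proposal misreads the geometry of the leaf closures, and as a consequence assigns the wrong role to the $\SO(k)$-bundle. The leaves of the foliation defined by the flat factor are the images in $M$ of the slices $\R^q\times\{y\}$; their closures are the images of $\R^q\times\overline{P\cdot y}$, where $P$ is the projection of $\pi_1(M)$ to $\Sim(N)$ and $\bar P$ its closure. So the relevant dynamics lives on the \emph{irreducible} factor $N$, not on $\R^q$: by Kourganoff, $\bar P^0$ is abelian and acts isometrically on $N$, so the lifted leaf closures are the orbits of $Q=\R^q\times\bar P^0$, which are flat. Your picture --- leaf closures as cosets of a subspace $V\subset\R^q$ obtained from Bieberbach theory applied to $\rho_1(\Gamma_0)\subset\Isom(\R^q)$, with the obstruction to a fibration coming from ``non-parallel rotated subspaces'' of $\R^q$ --- is not what happens; the leaf already fills the whole $\R^q$-direction, and the obstruction to regularity is that $\bar P^0$ need not act \emph{freely} on $N$, so the $Q$-orbits can have varying dimension and the closure foliation is a priori only a singular (Seifert-type) foliation with orbifold quotient. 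This is exactly what the $\SO(k)$-bundle in the statement repairs: one passes to the bundle of orthonormal frames of $N$ (for an invariant metric conformal to $g_N$), with $k=\dim N>2$, on which $\Sim(N)$ --- hence $\bar P$ --- acts freely and still by similarities, so the orbits become a regular foliation fibering over a genuine manifold. Your proposed construction $\hat M=\tilde M\times_\Gamma \SO(k)$ via a homomorphism built from the rotational parts on $\R^q$ cannot do this job: extending by a principal bundle does not alter the action of $\pi_1$ on the $\R^q$-factor (the deck group of the new total space still maps onto $\Gamma$ up to a central/finite kernel), and it does nothing about the non-freeness of $\bar P^0$ on $N$, which is the actual source of singularity.

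The rotational parts in $\O(q)$ do appear in the paper's argument, but at a different and later point, and they are handled by finite covers rather than by the bundle: once freeness on $N$ is arranged, Bieberbach shows $\Gamma_0=\pi_1(M)\cap(\R^q\times\bar P^0)$ is a lattice with finite holonomy group $H\subset\O(q)$, and Selberg's lemma gives a finite-index subgroup whose conjugation action on $\Gamma_0$ is torsion-free, forcing any element preserving an orbit to act on it as a translation; this is what makes each orbit descend to a torus. Your Steps 1--2 contain germs of this (the character $\mu$, Bieberbach, finite-index reduction), but Steps 3--5 --- the core of the argument --- rest on the incorrect identification of the leaf closures and on an $\SO(k)$-construction that does not trivialize anything relevant, so the proof as proposed does not go through.
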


\begin{remark} The general theory of Riemannian foliations states that closures of the leaves define a ``singular'' foliation (with all leaves closed). 
Then a principal fibration  trick allows one to desingularize it and get a regular foliation whose quotient space is an orbifold, that is the closure foliation is a ``Seifert fibration''. Our result here says that  the quotient space is a genuine manifold instead of an orbifold. Also, in our case, the closure of leaves are torii, as 
in the case of 1-dimensional Riemannian  foliations by a theorem of Y. Carri\`ere \cite{Car}.

\end{remark}
  
\section{Construction of a Haefliger structure} \label{SectHea}

From now on, we assume that the manifold $M$ is compact. \\

 The usual definition of the  holonomy starts with taking a  complete transversal submanifold $\TT$ to $\F$ and considering the holonomy transformations as local diffeomorphisms of $\TT$. One can take for instance $\TT$ to be a union of small local transversals in a family of flow boxes covering $M$. Changing $\TT$ to $\TT^\prime$ induces an equivalence between the two pseudo-groups of local diffeomorphisms of $\TT$ and $\TT^\prime$. All this, is quite delicate to formalize and manipulate. There is in particular the problem of artificial blow up of the holonomy  maps due to the choice of $\TT$ (in particular when approaching the $\TT$-boundary).  There is however the nice situation where $\F$ admits a supplementary foliation, and holonomy maps are viewed as local diffeomorphisms of this foliation. A strong   simple and ``talking''  sub-case is that  of foliated (also said flat) bundles where the holonomy is globally defined.   Here, $M$ fibers over a typical leaf $F$ with fiber $\TT$ and the foliation $\F$ is transversal to fibers. If $c$ is a path in $F$ with endpoints $x, y$, then the  associated holonomy is a map  $H_c: \TT_x  \to \TT_y$ given by lifting $c$ tangentially to $T \F$. In other words, $T \F$  as a supplementary of the vertical space of the fibration is seen as the horizontal space of a connection, and since it is integrable, this  (non-linear) connection is flat, and $H_c$ is the holonomy of its connection. The holonomy is given by a representation $\rho: \pi_1(F) \to \textrm{Diff}(\TT)$ and the total space is the suspension of $\rho$, namely the quotient of $\tilde{F} \times \TT$ by the diagonal action of $\pi_1(F)$.
 
In fact, Haefliger structures (even though motivated by other considerations), exactly allows one to maneuver to bring himself  back to this situation of foliated bundles, but locally. For a detailed exposition about these structures, see for example \cite[Section 1.3]{LM} and the references therein.

We will give the technical details in the subsequent lines, but the overall idea is as follows. We would like to associate to the foliation $\F$ a new foliation which is locally a flat bundle as described above, and whose holonomy group is equivalent to the one of $\F$ with a well-chosen identification. In order to do so, we define the new foliation not on the manifold $M$ but on a neighbourhood of the null-section of the normal bundle $\NN$ of $\F$ in $TM$. The word ``locally'' does not mean that we can locally trivialize the foliation here (since this is always the case), but rather that the foliated manifold we consider fibers over a typical leaf and this foliation is locally trivial only if we takes the fiber small enough around a point (see Figure~\ref{local}). We want the fibers of the fibration $\NN \to M$ to be transversals of the new foliation, so that the holonomy group would obviously be the same as the one of $\F$. We thus need a way to identify the normal bundle with transversals of the foliation $\F$: this can be done by taking the exponential of any Riemannian metric on $M$ restricted to the normal bundle of $\F$, and the new foliation will be the pull-back of $\F$ by this map. This construction has the effect of changing the dimension of the foliation without altering the co-dimension, but it also gives natural transversals, fixed once and for all, at each point $x$ of the null-section of $\NN$, since the normal bundle at $x$ is such a transversal. This transversal is moreover an open neighbourhood of zero in a vector space and this will allow us to define naturally foliations admitting a transverse invariant connection as those for which the holonomy is linear (see the beginning of Section~\ref{SectCon}).

\begin{figure}
\includegraphics[scale = 0.4]{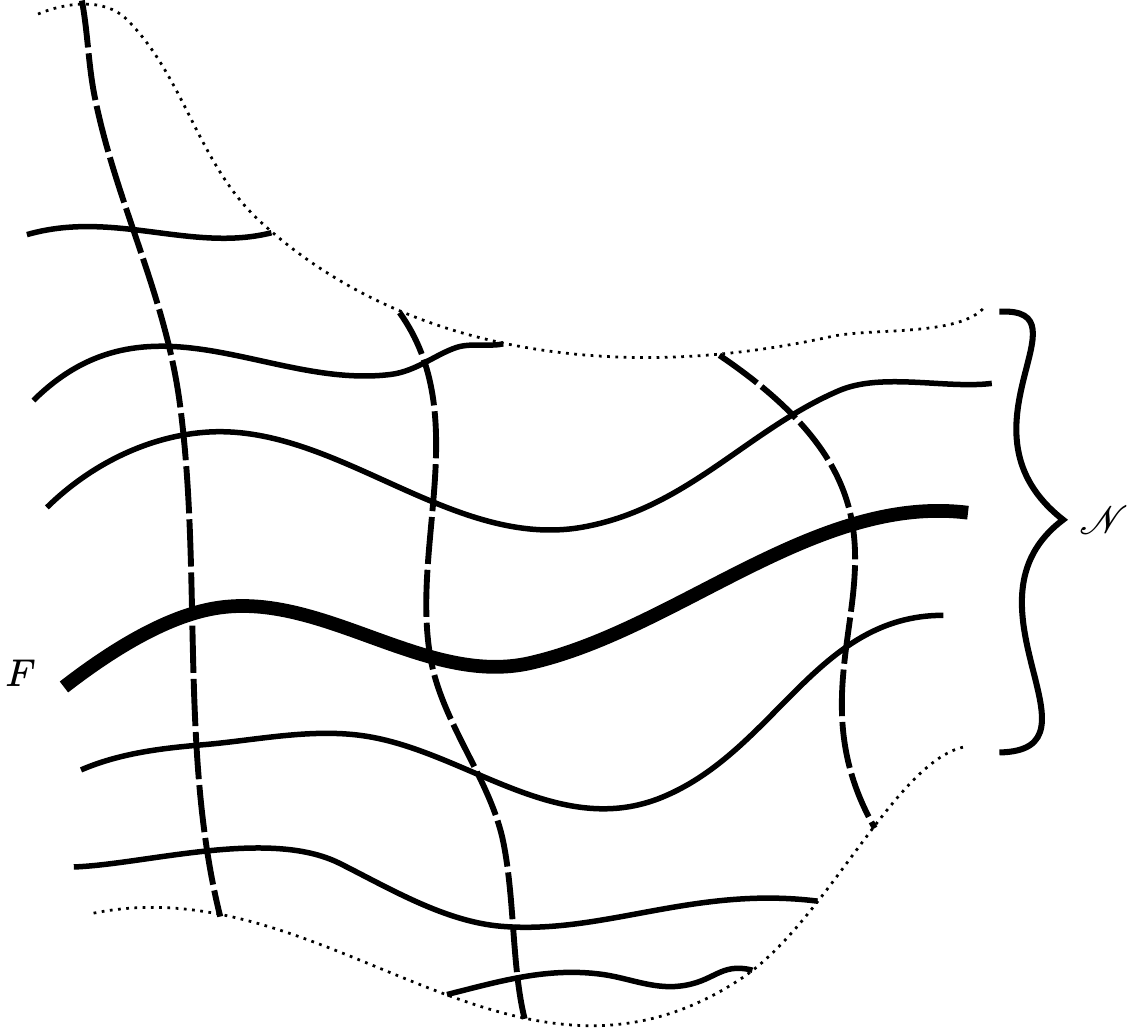}
\caption{An example where the foliation is only locally flat. The manifold $\NN$ fibers over the typical leaf $F$. The leaves are the horizontal curves and the vertical curves are the fibers of the fibration, however this fibration can be locally trivialized only on a neighbourhood of $F$, since some leaves are not ``complete".}
\label{local}
\end{figure}

Let us now give the actual construction. We endow $M$ with an auxiliary Riemannian metric $k$, which will be often implicit and whose choice does not matter.

We consider $\NN := T \F^\perp \simeq T M / T \F$, the normal bundle of $\F$. By compactness of $M$, there exists an open neighbourhood $O$ of the zero-section of $\NN$, such that the exponential Riemannian map of $k$, denoted by $\phi: O \to M$, once restricted to the fiber over a point $x \in M$, i.e. $\phi_x: O_x = O \cap \NN_x \to M$, is a diffeomorphism onto a transversal $\TT_x$ to $\F$ passing through $x$.

We can then consider $\hat{\F} := f^* \F$, the pull back by $\phi$ of $\F$.  This $\hat{\F}$ is  transversal to the fibers of the fibration $\NN \to M$ (but only locally, i.e. along $O$), and thus this looks like  to a foliated (flat) bundle.

The $\hat{\F}$-leaves are tubular neighbourhoods of the $\F$-leaves.  More precisely, $\F$ is gotten as the intersection of $\hat{\F}$ with the 0-section (of the fibration $\NN \to M$), and each $\hat{\F}$-leaf retracts naturally to a $\F$-leaf which is its intersection with the $0$-section. For this, when speaking of holonomy of $\hat{\F}$, we can restrict ourselves to $\F$-paths, i.e $\hat{\F}$-paths contained in $M$. We will in fact often identify $x \in M$ with its image $0_x$  by the $0$-section.

The philosophy is that $\F$ and $\hat{\F}$  have the same holonomy maps, which can be seen as local diffeomorphisms between the  family of $\hat{\F}$-transversals $\{O_x\}_{x \in M}$, or alternatively between the family of $\F$-transversals $\{\TT_x\}_{x \in M}$. More precisely, if $x,y \in M$ are in the same leaf $F$ of $\F$ and $c$ is a path in $F$ joining $x$ and $y$, the holonomy map induced by $c$, denoted by $\bar H_c : \TT_x \to \TT_y$ coincides with $\phi_y \circ H_c \circ \phi_x^{-1}$ defined on a neighbourhood of $0_x$ in the fiber $O_x$ to $O_y$, where $H_c$ is the holonomy map induced by $c$ on $(O, \hat{\F})$. Observe that the domain of definition of $H_c$ is not the full $O_x$, since  $H_c$ is obtained by considering horizontal lifts of $c$, but this does not necessarily exist for all time, e.g.  $O$ is not compact. \\

The {\em infinitesimal holonomy} map $h_c: \NN_x \to \NN_y$ is the derivative $d_x H_c$.  It also equals 
the usual holonomy of the Bott connection $\nabla^B$ on the normal bundle $\NN$, defined by
\begin{align}
\nabla^B_X Y := [X,Y]^\mathcal N, && \forall X \in \Gamma(TM), \forall Y \in \Gamma(\mathcal N),
\end{align}
where the $\mathcal N$-exponent stands for the projection onto the normal bundle. Indeed, this connection is flat and its holonomy bundle through $x$ is exactly the leaf containing $x$.

\section{Holonomy-invariant transversal connection} \label{SectCon}

We now assume that $(M,\F)$ carries a holonomy-invariant transverse connection $\nabla$, and we keep all the notations introduced in the previous section. This connection induces a connection $\nabla^x$ on each transversal $\TT_x$ in a natural way.

Observe that $\phi$ and accordingly $\hat{\F}$ is by no means unique, but any choice of $\hat{\F}$ shares with $\F$ the same holonomy pseudo-group (up to equivalence in this category). We will in fact keep $\TT_x$, but modify $\phi_x: O_x \to \TT_x$ to become the exponential map for $\nabla^x$. Indeed, the tangent space $T_x \TT_x$ coincides with $\NN_x$, and thus  the exponential map for  $\nabla^x$ sends a neighbourhood of $0_x$ (identified to $x$) to a neighbourhood of $x$ in $\TT_x$. So, up to a restriction of $O$, we can assume that the new $\phi$ is this exponential map.

Now, the holonomy map $\bar H_c $ sends  a neighbourhood of $x$ in $ \TT_x $ to a neighbourhood of $y$ in $ \TT_y$  and sends $\nabla^x$ to $\nabla^y$ because the transverse connection is holonomy-invariant. But a connection-preserving map becomes equal to its derivative in exponential coordinates. This means precisely that $H_c$ coincides on appropriate neighbourhoods with its derivative, the infinitesimal holonomy $h_c$. 
 
 It is also true, conversely, that $\F$ has a holonomy-invariant transversal connection if there is an associated $\hat{\F}$ having such a  ``linear'' holonomy.
 
\subsection{Equicontinuity domain} For any $x \in M$, we denote by $\Gamma_x(\mathcal F)$ the set of all paths emanating from $x$ and contained in the leaf passing through $x$.  One can define the (infinitesimal) equicontinuity (or lipschitz) domain  as the set
\[
\{ x \in M \ \vert \ \exists \alpha = \alpha_x > 0, \ \forall c \in \Gamma_x(\mathcal F), \ \parallel h_c \parallel \le \alpha \},
\]                                                                                                                                                                                                                                                                                                                                                                                                                                                                                                                                                                                                                                                                                                                                                                                                                                                                                                                                                                                                                                                                                                                                                                                                                                                                                                                                                                                                                                                                                                                                                                                                                                                                                                                                                                                                                                                                                                                                                                                                                                                                                                                                                                                                                                                                                                           
where the operator norm is defined by means of  an auxiliary   metric $k$.  However, we will need a ``bi-equicontinuity''  condition, where we also want $h_c$  to have a bounded contraction. To this purpose, we define $\E$ to be the (infinitesimal) bi-equicontinuity domain:
\[
\E = \{ x \in M \ \vert \ \exists \delta = \delta_x >1, \forall u \in O_x, \ \forall c \in \Gamma_x(\mathcal F), \ \frac{ \Vert u \Vert} {\delta(x)}  \leq \Vert h_c  (u) \Vert \leq \delta  \Vert u \Vert \}.
\]
In the similarity case, on which we will focus below,  this is equivalent to:
\[
\frac{1} {\delta(x)}  \leq \parallel h_c \parallel  \leq \delta(x)
\]
(or equivalently  $  \frac{1} {\eta(x)}  \leq   \det (h_c)  \leq \eta(x) $ where $\eta(x)$ is a power of the previous $\delta(x)$ depending on the codimension).                                                                                                                                                                                                                                                                                                                                                                                                                                                                                                                                                                                                                                                                                                                                                                                                                                                                                                                                                                                                                                                                                                                                                                                                                                                                                                                                                                                                                                                                                                                                                                                                                                                                                                                                                                                                                                                                                                                                                                                                                                                                                                                                                                                                                                                                                              

It is obvious that the bi-equicontinuity domain $\E$ is $\F$-invariant by definition. Our aim is to show that in the situation at hand, i.e. when the transversal has a holonomy-invariant Riemannian similarity structure, it is the whole manifold $M$. Since $M$ is connected, an easy strategy is to prove that $E$ is both open and closed. The openness is just a consequence of the existence of the holonomy-invariant transverse connection.
 
 \bigskip 

\begin{proposition} \label{Propagation} [Propagation of equicontinuity] The bi-equicontinuity domain $\E$ is open. Furthermore, the leaves admit compact invariant neighbourhoods in $\E$. In particular, the saturation of a compact subset in $\E$ is relatively compact in $\E$.
 \end{proposition}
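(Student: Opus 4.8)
The plan is to prove, in this order, (i) that $\E$ is open with locally bounded optimal bi-Lipschitz constant, (ii) that each sublevel set $\E_\lambda:=\{x\in M:\ \|h_c\|\le\lambda\text{ and }\|h_c^{-1}\|\le\lambda\text{ for all }c\in\Gamma_x(\F)\}$ ($\lambda\ge1$) is closed in $M$, and then (iii) to combine (i)--(ii). Throughout, the engine is the linearization established above: in the $\nabla$-exponential coordinates the holonomy maps $\bar H_c$ coincide with their differentials $h_c$. As a preliminary I would record that $\E$ is $\F$-saturated with optimal constant bounded on leaves: if $x\in\E$ has constant $\delta_x$ and $x''$ lies in its leaf, concatenating a fixed leaf-path $a$ from $x$ to $x''$ with an arbitrary $c\in\Gamma_{x''}(\F)$ gives $h_c=h_{a\cdot c}\circ h_a^{-1}$, and bi-equicontinuity at $x$ controls $\|h_{a\cdot c}^{\pm1}\|$ and $\|h_a^{\pm1}\|$, so $x''\in\E_{\delta_x^2}$; thus $\E=\bigcup_{\lambda\ge1}\E_\lambda$ with the $\E_\lambda$ nested.

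For (i): by compactness of $M$ I would first shrink $O$ so that it contains the $r_0$-ball bundle of the auxiliary metric for some uniform $r_0>0$ and so that $x$ and $\phi_x(u)$ lie in a common foliated chart carrying a piece of $\TT_x$ as a transversal whenever $\|u\|<r_0$. Fix $x\in\E$ with constant $\delta_x$. Since every initial sub-path of a $c\in\Gamma_x(\F)$ is again in $\Gamma_x(\F)$, the horizontal lift of $c$ through $u\in O_x$ has length controlled by $\sup_t\|h_{c|_{[0,t]}}\|\le\delta_x$, hence stays in $O$ as soon as $\|u\|<r_0/\delta_x$; so $\bar H_c=h_c$ is defined on the ball $\phi_x(\{\|u\|<r_0/\delta_x\})$ --- crucially, the \emph{same} ball for all $c$. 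Now, for $\bar x'=\phi_x(u')$ with $\|u'\|<r_0/\delta_x$ and an arbitrary $c'\in\Gamma_{\bar x'}(\F)$, lifting $c'$ through $x$ along a flow-box chain whose first box contains $x$ and carries $\TT_x$ produces $c\in\Gamma_x(\F)$ such that the holonomy of that chain, read as a germ of diffeomorphism of $\TT_x$, is $\bar H_{c'}$ at $\bar x'$ and $\bar H_c$ at $x$ (both defined, since $0$ and $u'$ lie in the common ball). As $\bar H_c=h_c$ is linear in exponential coordinates, its differential at $\bar x'$ --- which is $h_{c'}$ after the canonical identifications $T\TT_\bullet\simeq\NN$ --- is $h_c$ pre- and post-composed with differentials of $\nabla$-exponential maps at points of $\phi$-radius $\le r_0$; by compactness these are bounded above and below by some $C\ge1$ uniformly over $M$, so $\|h_{c'}^{\pm1}\|\le C^2\delta_x$. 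Hence $\bar x'\in\E_{C^2\delta_x}$, giving (i). (This is the one place where the holonomy-invariant transverse connection really enters.)

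For (ii): if $x_k\to x$ with $x_k\in\E_\lambda$, fix $c\in\Gamma_x(\F)$ and $u\in O_x$ with $\|u\|<r_0$. For $k$ large, $x_k$ sits on the plaque of some $t_k=\phi_x(v_k)\in\TT_x$ with $v_k\to0$, which is in the domain of the germ of $\bar H_c$; lifting $c$ through $t_k$ and prepending the plaque-path from $x_k$ to $t_k$ gives $c_k\in\Gamma_{x_k}(\F)$ converging smoothly to $c$. Choosing $u_k\in O_{x_k}$ with $u_k\to u$ and using continuity of parallel transport for the smooth flat Bott connection $\nabla^B$, one gets $h_{c_k}(u_k)\to h_c(u)$; letting $k\to\infty$ in $\|u_k\|/\lambda\le\|h_{c_k}(u_k)\|\le\lambda\|u_k\|$ and using homogeneity of $h_c$ shows $x\in\E_\lambda$. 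So each $\E_\lambda$ is closed, and $\E$ is an increasing union of compact sets.

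For (iii): given a leaf $L\subseteq\E$, the preliminary gives $L\subseteq\E_\lambda$ for some $\lambda$, so by (ii) $\overline L\subseteq\E_\lambda\subseteq\E$ is compact and $\F$-saturated (the closure of a saturated set is saturated). By (i), a closed tubular neighbourhood $C\supseteq\overline L$ lies in $\E$, and $C$ compact with optimal constant locally bounded forces $C\subseteq\E_\mu$; then $\mathrm{Sat}(C)\subseteq\E_{\mu^2}$, compact, so $N_L:=\overline{\mathrm{Sat}(C)}\subseteq\E_{\mu^2}\subseteq\E$ is a compact $\F$-saturated neighbourhood of $L$. Finally, for compact $K_0\subseteq\E$, each $z\in K_0$ has an open neighbourhood inside the compact saturated set $N_{L_z}\subseteq\E$, and a finite subcover yields $\mathrm{Sat}(K_0)\subseteq\bigcup_i N_{L_{z_i}}$, compact and inside $\E$; hence $\mathrm{Sat}(K_0)$ is relatively compact in $\E$. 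I expect (i) to be the main obstacle, and within it the \emph{uniformity}: one needs the identification of $h_{c'}$ with the corresponding $h_c$ to hold simultaneously for every $c'\in\Gamma_{\bar x'}(\F)$, which is exactly what linearity of the holonomy provides by forcing all the maps $\bar H_c$ onto the single ball $\phi_x(\{\|u\|<r_0/\delta_x\})$, with $r_0$ and the exponential distortion controlled globally by compactness of $M$. The flow-box bookkeeping behind ``$c'$ is the lift through $x$ of a path in $\Gamma_x(\F)$'' is routine foliation theory (see \cite{Mol}).
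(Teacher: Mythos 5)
Your proposal is correct, and its core — the openness statement — is essentially the paper's argument: use linearity of the holonomy in the $\nabla$-exponential charts to see that all maps $H_c$, $c\in\Gamma_x(\F)$, are defined on one fixed ball $B(0_x,r_0/\delta_x)$ (because the transported norms stay below $\delta_x\|u\|<r_0$, so the lifts never leave $O$), and then transfer the two-sided bound to nearby points at the cost of the uniformly bounded distortion of the exponential maps; this is exactly the paper's ``$S(x,\epsilon)\subset\bigcup_y B(0_y,\delta(x)\epsilon)$'' step, and you are in fact more explicit than the paper about the constants $r_0$ and $C$ and about the one genuinely delicate point, namely that \emph{every} leafwise path $c'$ issued from $\bar x'=\phi_x(u')$ is shadowed by some $c\in\Gamma_x(\F)$ so that $h_{c'}$ is $h_c$ up to bounded conjugation (the paper compresses this into ``by linearity of the holonomy''; your flow-box bookkeeping is where the open--closed/chart argument really lives, and it would be worth writing out once). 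Where you genuinely diverge is in the remaining two assertions: the paper obtains the compact invariant neighbourhoods and the relative compactness of saturations directly from the Haefliger picture, since $\phi(S(x,\epsilon))$ is itself a relatively compact $\F$-invariant neighbourhood of the leaf, whereas you introduce the sublevel sets $\E_\lambda$, prove they are closed by continuity of the Bott parallel transport along approximating paths, and then combine this with the leafwise concatenation bound $\delta_x^2$. The paper's route is shorter and stays inside the construction already made; your route has the advantage of making explicit why the closure of the invariant neighbourhood is still contained in $\E$ (a point the paper leaves implicit), and the closedness of $\E_\lambda$ is a reusable statement in the same spirit as the later closedness discussion for $\E$.
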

 
 \begin{proof} Let $x \in \E$, $\epsilon >0$.  Consider the ball $B(0_x, \epsilon)$ (with respect to the metric $k$) and  its saturation by
 the holonomy pseudo-group of $\hat \F$:
\[
S(x, \epsilon) := \bigcup_{c \in \Gamma_x(\mathcal F)} H_c (B(0_x, \epsilon)).
\]
Since the holonomy is linear (that is $H_c = h_c$), 
and by definition of $\delta (x)$,
\[
S(x, \epsilon) \subset \bigcup_{y \in \F_x} B(0_y, \delta (x) \epsilon),
\]
hence, for $\epsilon$ small one has $\overline{S(x, \epsilon})\subset O$. The holonomy maps are thus defined for all times if one starts with  a sufficiently small ball $B(0_x, \epsilon)$, and in addition the image of the saturation $S(x, \epsilon) $ by $\phi$ is a relatively compact $\F$-invariant neighbourhood of $\F_x$. For $u \in B(0_x, \epsilon)$ near $0_x$, $u$ in the bi-equicontinuity domain of $(O, \hat \F)$ by linearity of the holonomy. Now, if as previously  announced we want to restrict ourselves to the holonomy of $\F$-leaves (instead of $\hat{\F}$), we use $\phi: O \to M$, and see that $z = \phi (u)$ belongs to the 
$\F$-bi-equicontinuity domain, which is therefore open.
\end{proof}

\section{Holonomy-invariant transversal  similarity structure} \label{SectSim}

We now endow the foliation $\F$ with a transverse (Riemannian) similarity structure in the sense of Definition~\ref{defSimStruct}. This in turn gives us a holonomy-invariant transverse connection $\nabla$ as explained in the line following the definition. In particular, all the constructions and results of the previous sections still hold, and we keep the same notations. Our goal is to prove the closeness of $\E$ under this assumption.

As above, we can assume that for any $x \in M$, $\phi_x : O_x \to \TT_x$ is the exponential map of $\nabla^x$, up to a restriction of $O$.

We can define a transverse Riemannian metric $g$ on $(M, \F)$ by taking around each point $x \in M$ a local transverse metric $g_{V_x}$ on $T \F / T M$ belonging to the transverse similarity class and restricting it to $T_x M /T_x \F$. In order to insure smoothness of this family we fix a volume element on the normal bundle $\NN \simeq N \F \to M$.

Note that $g$ is not holonomy-invariant in general, since it depends strongly on the chosen volume element.

\subsection{Singular metric}

Let $R$ be the curvature tensor of the bundle $\NN \to M$ endowed with the connection $\nabla$ (seen as a map from $\Gamma(TM) \times \Gamma(\NN)$ to $\Gamma(\NN)$). We restrict it to $\Gamma(\NN) \times \Gamma(\NN)$ and we consider the function $w : M \to \R_{\ge 0}$ which associates to $x \in M$ the norm of $R$ with respect to $g$ at $x$.

The function $w : O \to \R_{\ge 0}$ is continuous because it is the norm of a smooth function. In particular, since $M$ is compact, $w$ has a maximum $\max_M w$. The (possibly degenerate) transverse metric $m$,  defined  by $m := w g$, is a holonomy-invariant singular transverse metric. 
 
 \begin{proposition} \label{non-flat curvature}
Let $\UU = \{x \in M, \ w(x) \neq 0 \}$. Then, $\UU$ is contained in the bi-equicontinuity domain $  \E$.
 \end{proposition}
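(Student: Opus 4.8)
\medskip
\noindent\emph{Plan of proof.} The key point to exploit is that $m=wg$ is a \emph{holonomy-invariant} singular transverse metric, so I would start by spelling out why, in terms of the (now linearised) holonomy. Since $\nabla$ is projectable, so is its curvature $R$: for any leaf-path $c$ from $x$ to $y$ the infinitesimal holonomy $h_c\colon\NN_x\to\NN_y$ satisfies $h_c^{\ast}R_y=R_x$, while holonomy-invariance of the similarity structure gives $h_c^{\ast}g_y=\lambda_c\,g_x$ for some constant $\lambda_c>0$. As the pointwise norm of a $(1,3)$-tensor scales by $\mu^{-1}$ under a homothety $g\mapsto\mu g$, comparing norms yields $w(x)=\lambda_c\,w(y)$, hence
\[
h_c^{\ast}m_y=h_c^{\ast}\bigl(w(y)g_y\bigr)=w(y)\lambda_c\,g_x=w(x)\,g_x=m_x .
\]
So each $h_c$ is an isometry $(\NN_x,m_x)\to(\NN_y,m_y)$, and $\lambda_c=w(x)/w(y)$ depends only on the endpoints.

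Next I would fix $x\in\UU$, so that $m_x=w(x)g_x$ is positive definite. For $c\in\Gamma_x(\F)$ with endpoint $y$, the relations $h_c^{\ast}m_y=m_x$ and $h_c$ invertible force $m_y$ positive definite, i.e.\ $w(y)>0$; thus $\UU$ is $\F$-invariant and $\F_x\subseteq\UU$. Unwinding the isometry, $g_y(h_cu,h_cu)=\tfrac{w(x)}{w(y)}\,g_x(u,u)$, so $h_c$ is a $g$-similarity of ratio $\sqrt{w(x)/w(y)}$. Using that on the compact $M$ the transverse metric $g$ and the normal part of $k$ are uniformly equivalent, say $C^{-1}k\le g\le Ck$, and that $0<w\le W:=\max_M w$, this immediately gives the uniform \emph{lower} bound $\Vert h_c(u)\Vert_k\ge C^{-1}\sqrt{w(x)/W}\,\Vert u\Vert_k$ for every $c\in\Gamma_x(\F)$.

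The remaining point — the uniform \emph{upper} bound, i.e.\ bounding $\sqrt{w(x)/w(y)}$ over all $y\in\F_x$, equivalently showing $\overline{\F_x}\subseteq\UU$ — is where the real work lies. I would note that the flat locus $Z=\{w=0\}$ is closed and, by the $\F$-invariance of $\UU$, saturated; that the holonomy \emph{group} of $\F_x$ at any point $z$ has ratio $\sqrt{w(z)/w(z)}=1$, hence acts by $g$-isometries; and that if $\overline{\F_x}$ met $Z$ there would be $y_n\in\F_x$ with $w(y_n)\to0$, producing holonomies $h_{c_n}$ of $g$-expansion $\to\infty$ accumulating on the saturated, transversely flat, isometrically-holonomic set $Z$. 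I would rule this out using the compactness of $\overline{\F_x}$ together with the uniform control of the holonomy pseudo-group of $\F$ over a compact manifold — and this is precisely the step where the similarity hypothesis, rather than a mere transverse connection, is needed. Granting $\overline{\F_x}\subseteq\UU$, $w$ has a minimum $w_x>0$ on the compact set $\overline{\F_x}$, and the computation above gives $\Vert h_c(u)\Vert_k\le C\sqrt{w(x)/w_x}\,\Vert u\Vert_k$; combined with the lower bound this shows $x\in\E$ (one may take $\delta(x)=C\sqrt{W/w_x}$).

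I expect the third paragraph to be the main obstacle: proving that the linearised holonomy cannot acquire unbounded expansion along a leaf meeting $\UU$, equivalently $\overline{\F_x}\subseteq\UU$. Everything before it is bookkeeping with the holonomy-invariant form $m=wg$ and the compactness of $M$.
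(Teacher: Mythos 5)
Your first two paragraphs are correct and coincide with the paper's starting point: the infinitesimal holonomy along a leaf path from $x$ to $y$ is a $g$-homothety of ratio $\sqrt{w(x)/w(y)}$ (equivalently, $m=wg$ is holonomy-invariant), the zero locus $\{w=0\}$ is saturated, and the lower bound on $\Vert h_c\Vert$ is immediate from $w\le \max_M w$. But the whole content of the proposition is exactly the step you postpone with ``Granting $\overline{\F_x}\subseteq\UU$'': showing that $w$ has a positive infimum along the leaf. As written this is a genuine gap, and your sketched route for closing it --- ruling out unbounded expansion ``using the uniform control of the holonomy pseudo-group of $\F$ over a compact manifold'' --- begs the question: on a compact manifold the holonomy pseudo-group of a foliation has no a priori uniform control (that is precisely the equicontinuity one is trying to establish), so accumulating expanding holonomies $h_{c_n}$ on the saturated flat set $Z$ is not by itself a contradiction.

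The missing idea in the paper is a contraction trick that converts the pointwise estimate into a uniform one. Given a leaf path $c$ from $x$, let $y=c(t_0)$ be a point where $w$ attains its minimum along $c$. Then the holonomy from $y$ to every other point $c(t)$ has ratio $\sqrt{w(y)/w(c(t))}\le 1$, i.e.\ it is $g$-contracting; since the holonomy is \emph{linear} in the exponential coordinates of $\nabla$ and, by compactness of $M$, the open set $O$ contains a ball $B(0_z,\epsilon)$ of uniform radius over every $z$, these contracting holonomies are defined on the whole ball $B(0_y,\epsilon)$ for an $\epsilon$ independent of the path (no blow-up of the domain of definition, which is the point your sketch does not control). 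Now shrink $\epsilon$ so that $w$ does not vanish on $\phi(B(0_x,\epsilon))$; since $\{w=0\}$ is holonomy-saturated and the holonomy from $y$ to $x$ maps $B(0_y,\epsilon)$ into $B(0_x,\epsilon)$, $w$ cannot vanish on $\phi(B(0_y,\epsilon))$ either. Thus every minimum point along every leaf path from $x$ keeps a transversal ball of uniform radius $\epsilon$ disjoint from $\{w=0\}$, and compactness of $M$ then yields a uniform positive lower bound for $w$ on $\F_x$, hence the two-sided bound on the homothety ratios and $x\in\E$. Without this argument (or a substitute for it), your proposal establishes only the easy half of the statement.
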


 \begin{proof}  Let $x \in \UU$ and let $c: [0, 1] \ni t \mapsto c(t) \in  \F_x$ be a path  joining $x = c(0)$ to $y := c(1)$.  Let $a :=  w(x), b := w(y)$. Assume $a > b$,  say, more precisely,  that $b = \inf \{w (c(t)), \ t \in [0, 1]\}$. The infinitesimal holonomy from $\NN_{c(t)} $ to $\NN_{c(s)}$ is a homothety of ratio $\sqrt{ \frac{w(c(t))}{w(c(s))}}$ with respect to the metric $g$. In particular, since $b$ realizes the minimum of $w$ on $c$, there exists $\epsilon > 0$ (which can be taken uniform because $M$ is compact) such that for any $t \in [0, 1]$, the $\hat \F$-holonomy $B (0_y, \epsilon) \to B (0_{c(t)}, \epsilon)$  is well defined and contracting.
 
Thus, if $w$ vanishes somewhere on $\phi(B (0_y, \epsilon))$, then the same holds on $\phi(B (0_x, \epsilon))$. Assuming $\epsilon$ is small enough, $w$ does not vanish on $\phi(B (0_x, \epsilon))$ neither on $\phi(B (0_y, \epsilon))$. In particular, the $\epsilon/2$-tubular neigbhourhood of the zero-section in $\NN$ is sent by the exponential map $\phi$ onto a subset of $M$ whose closure does not meet the null locus of $w$. Consequently, $w$ has a non-zero infimum on $\F_x$, and the homothety ratios $\sqrt{\frac{w(x)}{w(z)}}$ are bounded by two positive bounds for all $z \in \F_x$. Therefore, $x \in \E$.
 \end{proof}

  \subsection{Dichotomy}

It remains to prove that $\E$ is closed, which would lead to the following result:

 \begin{proposition} \label{Closed}
A Foliation with a transverse holonomy-invariant similarity structure is everywhere bi-equicontinuous whence 
 it is bi-equicontinuous at some point, that is,
 if $\E$ is non-empty, then  $\E = M$.
 
 \end{proposition}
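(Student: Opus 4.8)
By Proposition~\ref{Propagation} the bi-equicontinuity domain $\E$ is open, so, $M$ being connected, it suffices to prove that $\E$ is closed; this is exactly the assertion that $\E$ non-empty forces $\E = M$. The plan is therefore to fix $x_0 \in \overline{\E}$ and to show $x_0 \in \E$, splitting according to the value at $x_0$ of the holonomy-invariant curvature norm $w$ of the previous paragraph.

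If $w(x_0) \neq 0$ there is nothing to prove: then $x_0 \in \UU$, and Proposition~\ref{non-flat curvature} already gives $x_0 \in \E$. The whole content lies in the case $w(x_0) = 0$. There I would first observe that $w$ then vanishes on the entire leaf $\F_{x_0}$, hence, by continuity, on $\overline{\F_{x_0}}$: since $m = w g$ is a holonomy-invariant transverse metric, the infinitesimal holonomy $h_c \colon \NN_{x_0} \to \NN_y$ along any leaf path $c$ satisfies $h_c^{\,*} m_y = m_{x_0} = 0$, and $h_c$ invertible together with $g$ non-degenerate force $w(y) = 0$. Thus along $\F_{x_0}$ the transverse curvature is trivial and the metric $m$ carries no information; the holonomy has to be controlled directly.

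This is the only place where the hypothesis $x_0 \in \overline{\E}$ intervenes. Since $\E$ is a union of leaves, from a sequence of points of $\E$ converging to $x_0$, after sliding each one to its intersection with the fixed transversal $\TT_{x_0}$ (still in the same leaf, hence still in $\E$), one obtains a point $z = \phi_{x_0}(u) \in \TT_{x_0} \cap \E$ with $\|u\|$ as small as desired; let $\delta(z)$ be its bi-equicontinuity constant. Because the holonomy has been linearized, the $\F$-holonomy $\bar H_c$ along a loop $c$ at $x_0$ is a linear conformal map $h_c = \mu_c A_c$ of $\TT_{x_0}$, and near $z$ this very map is the $\F$-holonomy along the lift $\widetilde c$ of $c$ issued from $z$, a path in the leaf $\F_z$. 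As $z \in \E$, that holonomy is bi-Lipschitz with constant $\delta(z)$, so $\|d_z \bar H_c\|$ and $\|(d_z\bar H_c)^{-1}\|$ are $\le \delta(z)$; differentiating $\bar H_c \circ \phi_{x_0} = \phi_{x_0} \circ h_c$ at $u$ and using that $\phi$ and $\phi^{-1}$ have bounded derivatives on the (uniformly bounded) region of $O$ involved, one reads off $\mu_c \le C\,\delta(z)$, and $\mu_c^{-1} \le C\,\delta(z)$ by applying the same to $c^{-1}$. This gives bi-equicontinuity for loops at $x_0$, and running the identical comparison for arbitrary paths $c \in \Gamma_{x_0}(\F)$ against the corresponding paths in $\F_z$ yields $x_0 \in \E$.

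The main obstacle is the proviso hidden in the previous step. A priori the scales $\mu_c$ are unbounded over loops $c$ at $x_0$, so $h_c(u)$ --- of norm $\mu_c\|u\|$ --- may leave the domain $O$, in which case the lift $\widetilde c$ is undefined and one would be comparing against a holonomy map that does not exist; moreover $\phi^{-1}$ is only boundedly bi-Lipschitz on compact parts of $O$, which fails if $h_c(u)$ escapes. To make the transfer rigorous one must first force all the relevant lifts to be globally defined: here is where the \emph{compact $\F$-invariant neighbourhood of $\F_z$ contained in $\E$} provided by Proposition~\ref{Propagation} has to be used, arranging --- by taking $z$ sufficiently close to $x_0$ --- that the orbit $\{h_c(u) : c\}$ is absorbed by the pull-back of that neighbourhood. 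Managing the tension between ``$z$ very close to $x_0$'' and ``$\delta(z)$ possibly large'', so that both requirements hold at once and uniformly along the possibly non-precompact leaf $\F_{x_0}$, is the technical heart of the argument.
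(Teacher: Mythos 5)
Your overall frame (openness from Proposition~\ref{Propagation}, so it suffices to show $\E$ is closed) matches the paper, and your easy case $w(x_0)\neq 0$ is fine, though it is not needed. But the hard case is not proved: you yourself flag that ``managing the tension between $z$ very close to $x_0$ and $\delta(z)$ possibly large\dots is the technical heart of the argument,'' and that heart is exactly what is missing. Your mechanism is: pick $z\in\E\cap\TT_{x_0}$ near $x_0$, transfer the bi-Lipschitz bound $\delta(z)$ of the holonomy along $\F_z$ to the linear holonomies $h_c$ at $x_0$. For this you need the lifts of every (arbitrarily long) leaf path $c\in\Gamma_{x_0}(\F)$ starting at $z$ to be defined, i.e.\ $h_c(u)$ must never leave $O$; the only tool you invoke for this is the invariant neighbourhood of $\F_z$ from Proposition~\ref{Propagation}, whose size depends on $\delta(z)$ and may shrink faster than $\mathrm{dist}(z,x_0)$ as $z\to x_0$. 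Without a bound on $\delta(z)$ near the boundary of $\E$ the argument circulates: an open set is of course not closed in general, and what distinguishes $\E$ here is precisely a uniform estimate at boundary points, which your sketch does not produce.

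The paper closes this gap by a different device, not by controlling $\delta(z)$ for interior points. Arguing by contradiction with $A:=\bar\E\setminus\E\neq\emptyset$, it removes an $\epsilon$-neighbourhood of $A$ from $\E$ and saturates the rest, obtaining (via Proposition~\ref{Propagation}) a compact $\F$-invariant set $\bar U\subset\E$ at definite distance from $A$. For $x\in A$ one takes the maximal transverse ball $\phi(B(0_x,\epsilon_x))$ contained in $M-\bar U$; compactness of $\bar U$ gives a uniform lower bound $\epsilon_x\ge\epsilon_0$, while invariance of $A$ and of $M-\bar U$ forces the image of such a ball under any holonomy map (a ball again, since in exponential coordinates the holonomy is a linear similarity) to have radius between $\epsilon_0$ and $\epsilon$. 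This pinching yields a distortion bound $\epsilon/\epsilon_0$ independent of the point and of the path, which simultaneously guarantees that the lifts are defined for all time and that $x\in\E$, the desired contradiction. In short, the uniform control comes from the geometry of the complement of a compact saturated subset of $\E$ together with the ball-to-ball property of similarity holonomy, not from the equicontinuity constant of a nearby point; supplying an argument of this kind (or some substitute) is what your proposal still needs.
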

 
 \begin{proof} Let $A := \bar{\E} - \E$ be the boundary of $\E$ (this a an $\F$-invariant subset of $M$), $B$ an $\epsilon$-neighbourhood of $A$ (for the metric $g$, and taking $\epsilon$ small enough) and $C := \E - B$. Thus, by Proposition 
 \ref{Propagation} the saturation $U$  of $C$ is an invariant subset whose closure $\bar U$ is a saturated compact subset of $\E$.
 
Assume by contradiction that $A$ is non-empty. Let $x \in A$. There is a maximal transverse open ball $\phi(B (0_x, \epsilon_x))$ contained in $M - \bar U$, with $\epsilon_x \leq \epsilon$.  Moreover, since $\bar U$ is compact and does not meet the boundary of $\E$, there exists $\epsilon_0 > 0$ such that $\epsilon_x \ge \epsilon_0$.  If a holonomy map is defined on $\phi(B(0_x, \epsilon_x))$, then its image is a ball $\phi(B(0_y, \epsilon^\prime))$ with $y \in A$ and  $\epsilon^\prime \leq \epsilon_y \leq \epsilon$ (since otherwise we meet points of the invariant set $U$). This implies that the holonomy map has derivative of bounded distortion $\epsilon / \epsilon_x \le \epsilon / \epsilon_0$, which does not depend on the point $x$.
 
 Now, let $c$ be a path emanating from $x$, i.e. $c(0)= x$, with $c(1)= y$. For small $t$, the holonomy from $c(0)$ to $c(t)$ is defined on the whole $\phi(B (0_x, \epsilon_x))$.  Its image is contained in $\phi(B (c(t), \epsilon_{c(t)})) \subset \phi(B(c(t), \epsilon))$.  Thus, this is defined for all $t$.
 
It follows that $x \in \E$: contradiction. This means that $\E$ is closed in $M$, and since it is also open and $M$ is connected, either $\E = M$ or $\E = \emptyset$.
 \end{proof}

\begin{figure}[!htbp]
\includegraphics[scale = 0.4]{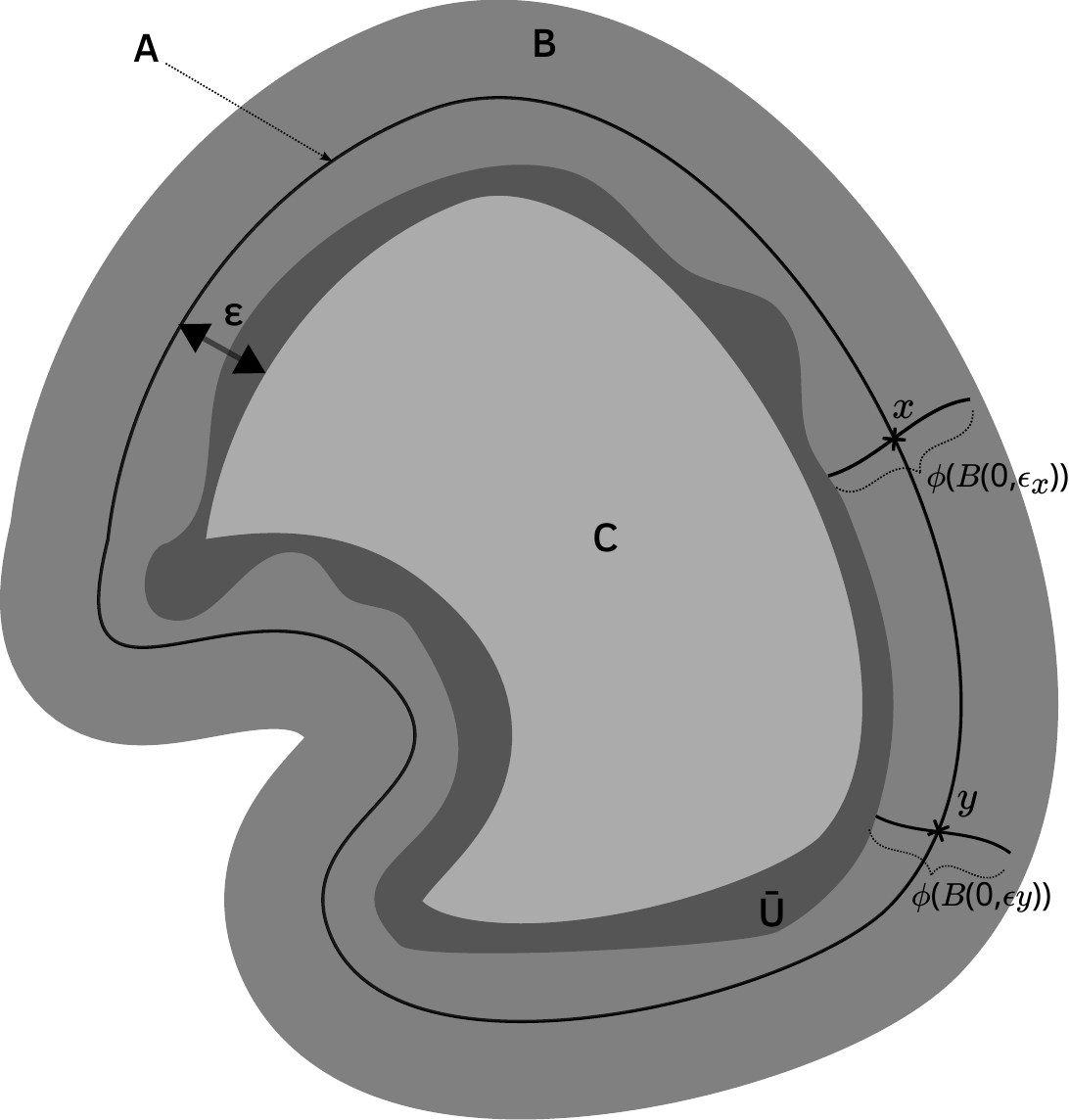}
\caption{Illustration of the proof of Proposition~\ref{Closed}.}
\end{figure}

\subsection{Counter-example in the general transversely affine case} This  dichotomy  is no longer true for general foliations with a transversal connection. An example of a transversally Lorentzian foliation of dimension 1 in a manifold of dimension 3, with a proper domain of equicontinuity, is given in   \cite{BMT}. We outline the construction here.
 
In order to give a first feeling of the example, we describe a simple construction which will not lead to a counter-example but provides the general idea. Let $\Sigma$ be a closed surface and let $f: \Sigma \to \R^{1, 1}$ be a smooth map, where $\R^{1,1}$ is the Minkowski plane endowed with the Lorentz metric  $dx dy$. Consider the map $d:  \Sigma \times \R \ni (z, t) \mapsto h^t (f(z)) \in \R^{1,1}$, with $h^t(x, y) = (e^t x, e^{-t} y)$.

Assume that $f $ is chosen so that $d$ is a submersion, hence we can consider the foliation defined by the level-sets of $d$. One has $d (z, t+ n) = \phi^n( d(z, t))$, where $\phi := h^1$, so the foliation descends to a foliation of dimension $1$ on $\Sigma \times \mathbb S^1 \simeq \Sigma \times (\R / \Z)$.

In this way, $\Sigma \times \mathbb S^1$ has a transverse structure modelled on $\R^{1, 1}$ and its cover $\Sigma \times \R$ has $d$ as a developing map, and holonomy $\phi$.  More precisely the holonomy homomorphism $\pi_1(\Sigma \times \mathbb S^1) = \pi_1(\Sigma) \times \Z \to \Isom (\R^{1, 1})$ is trivial on $\pi_1(\Sigma)$ and sends the generator of $\Z$ to $\phi$. 
 
 Recall that $d$ is defined as $d:  \Sigma \times \R \ni (z, t) \mapsto h^t (f(z)) \in \R^{1,1}$. If we want $d$ to a be a submersion, we exactly need the image of $D_zf$ to be transversal to $V(f(z))$, where $V$ is the vector field generating the one parameter group $h^t$ (So $V(x, y) = (x, -y)$).  In particular the boundary of $f(\Sigma)$ must be transverse to $V$, but this is not possible. Indeed, if we take the furthest (to $0$) non-straight curve of the $h$-flow passing through a point $f(z)$ of $f(\Sigma)$, if $D_z f$ were transverse to $V$, then there would exist a point close to $f(z)$ on a more distant flow curve. \\

We need to modify the constriction as follows. Instead of $\R^{1, 1}$, we will consider the 2-torus endowed with the product $\SL(2, \R) \times \SL(2, \R)$-action, that is the Einstein universe $\Eins^{1, 1}$ endowed with the Moebius group action. A model for this manifold is the compactification of the Minkowski space $\R^{1,1}$ together with the conformal structure given by the Lorentzian metric. More precisely, $\R^{1,1}$ embeds into the torus $T^2$ using for example the map $\varphi : \R^{1,1} \ni (x,y) \mapsto (\arctan x, \arctan y) \in \T^2 \simeq (\R/\pi \Z)^2$. The infinitesimal generator $V$ of the one-parameter group $h^t$ then induces a one-parameter group on the image of $\varphi$, still denoted by $h^t$. This one-parameter group extends uniquely to $\T^2$ since $\varphi(\R^{1,1})$ is $\T^2$ minus two circles, and is then dense. The $1$-parameter group of transformations $h^t$ has two attracting points, say  $A_1$ and $A_2$.

\begin{figure}[!htbp]
 \includegraphics[scale = 0.3]{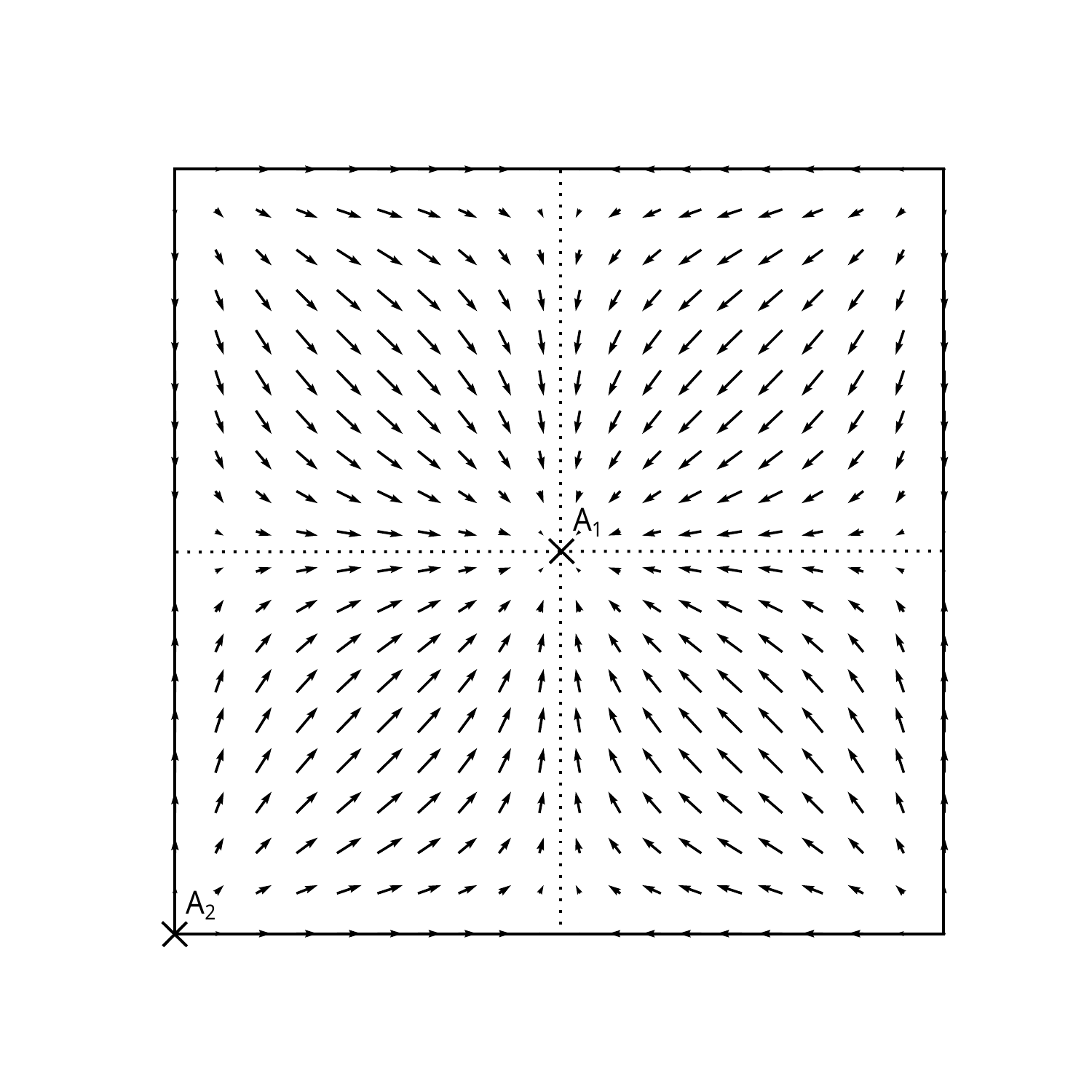}
\caption{The unfolded Einstein universe, together with the vector field induced by $h^t$.}
\label{toreplat}
\end{figure}

We claim that $h^t$ preserves a lorentzian metric on $\Eins^{1,1} - \{A_1, A_2\}$. To prove this, it is enough to find a metric  on $\R^{1,1}$, which will be conformal to the standard one and defines a metric on $\Eins^{1,1} - \{A_1, A_2\}$ by pull-back (after extension to the set where the metric is not defined). One can take $g := \frac{dx dy}{1 + x^2 y^2}$, which satisfies this condition.

Now, we can find small discs around $A_1$ and $A_2$ which are transverse to the vector field generating $h^t$. Removing them, we get a 2-punctured torus $\ddot\T^2$ with boundary $S^1 \cup S^1$.  We now consider the surface $\Sigma$ obtained by gluing smoothly two copies of this punctured torus along their boundaries. In order to define a suitable function $f$, we write $\Sigma = \ddot\T^2 \cup \ddot\T^2 \cup GZ$ where $GZ$ is the gluing zone. We define $f$ to be the canonical projection onto $\T^2$ on each copy of $\ddot\T^2$ and it is easy to see that we can chose $f$ so that its set of critical points is just two circles, projecting onto two disjoint small circles around $A_1$ and $A_2$ (see Figure~\ref{dessin} below). But such circles are transverse to the vector field generating the flow $h^t$, hence the map $d : \Sigma \times \R \ni (z,t) \mapsto h^t \cdot f(z) \in \Eins^{1,1}$ is a submersion.

\begin{figure}[!htbp]
 \includegraphics[scale = 0.25]{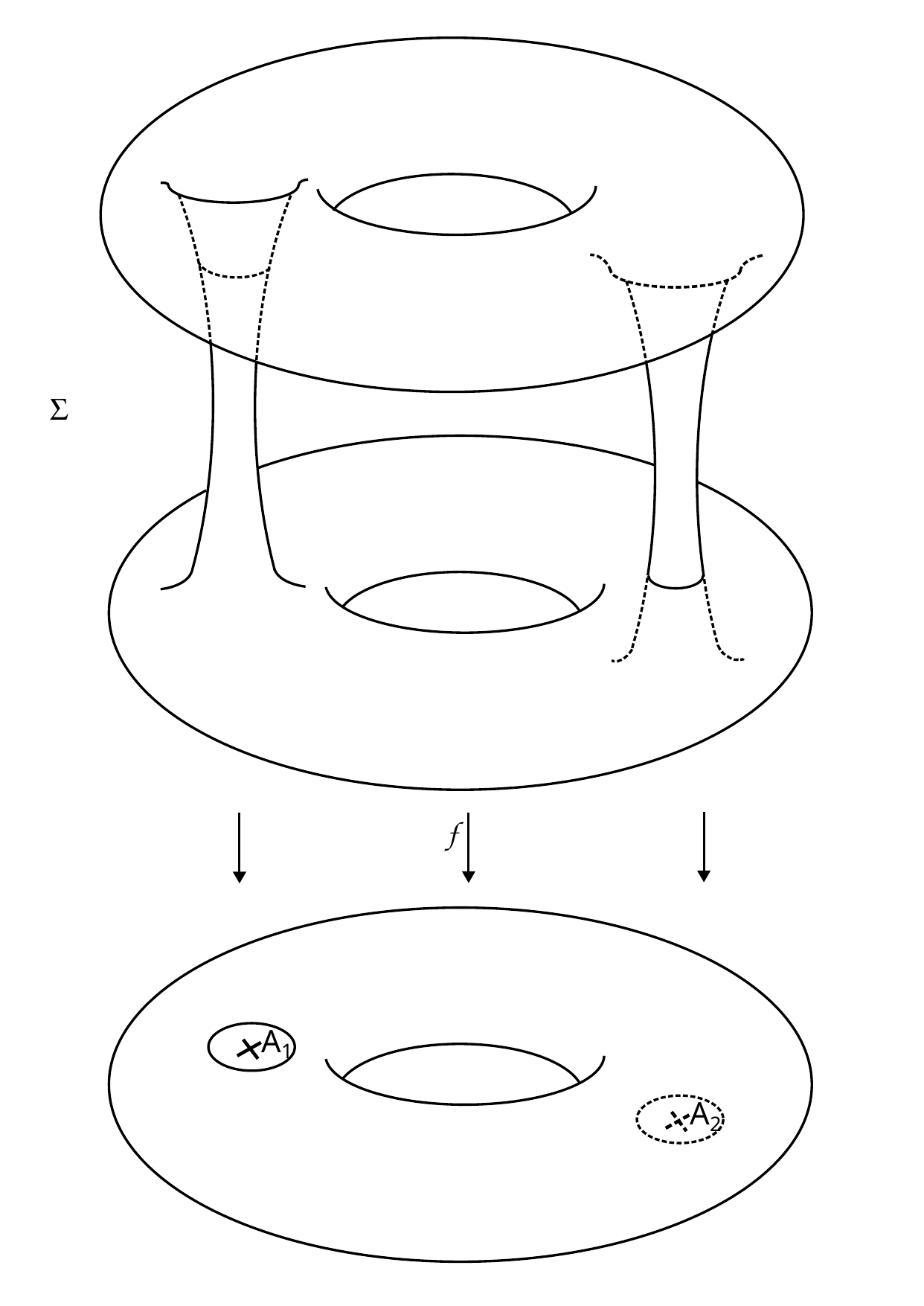}
\caption{The map $f$ from $\Sigma$ to $\Eins^{1,1}$. Note that the drawing is not faithful since there should be no self-intersection, but this is an immersion of $\Sigma$ into $\R^3$.}
\label{dessin}
\end{figure}

As before, we obtain a foliation on $\Sigma \times \mathbb S^1$ given by the level-sets of $d$ and with holonomy $\phi := h^1$. The holonomy morphism sends the generator of $\Z$ to $\phi$, as previously. The holonomy on this foliation preserves the Lorentzian metric $g$, hence it preserves the induced Levi-Civita connection on $\Eins^{1,1}$. The equicontinuity domain is a non-empty but also proper subset since it consists of all the points outside of the two points where the flow $h^t$ is hyperbolic and the straight lines of the flow emanating from them (in other words, all the straight lines drawn on Figure~\ref{toreplat}).
 
 \section{Proofs of Theorems \ref{No Flat}, \ref{With Flat}, \ref{Similarity}, \ref{De Rham decomposition}} \label{SectProofs}
 
 \subsection{Equicontinuity implies Riemannian} \label{Equi.Riem.}

We recall that the notion of equicontinuity for foliations has been defined in Equation~\ref{lamequi}.

So far, we have proved that a compact foliated manifold admitting a transverse holonomy-invariant similarity structure is either flat, or everywhere equicontinuous using Proposition~\ref{non-flat curvature} and Proposition~\ref{Closed}. In order to complete the proof of Theorem~\ref{No Flat}, it remains to show that the foliation is then Riemannian.

 \begin{theorem} \label{Equi.Riem.Theorem}
A foliation with a transverse holonomy-invariant connection on a compact manifold is Riemannian once it is equicontinuous.
\end{theorem}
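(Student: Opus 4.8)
The plan is to show that an equicontinuous foliation with a transverse holonomy-invariant connection $\nabla$ admits a transverse holonomy-invariant \emph{metric}, which by definition makes it Riemannian. The strategy is an averaging argument: starting from the (non-holonomy-invariant) transverse metric $g$ constructed in Section~\ref{SectSim} from a choice of volume element, we average over the holonomy pseudo-group to produce an invariant one. Equicontinuity is exactly what guarantees this average is finite and non-degenerate.

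First I would work on the linearized picture provided by the Haefliger structure $(O,\hat\F)$: since $\nabla$ is holonomy-invariant and we have chosen $\phi_x$ to be the $\nabla^x$-exponential map, the holonomy maps $H_c$ are linear, i.e. $H_c = h_c \colon \NN_x \to \NN_y$. So the entire pseudo-group of infinitesimal holonomies $\{h_c : c \in \Gamma_x(\F)\}$ acts by linear isomorphisms between fibers of $\NN$, and equicontinuity (Equation~\ref{lamequi}, or its infinitesimal form defining $\E$, which we now know equals $M$) says these linear maps and their inverses are uniformly bounded in operator norm with respect to the auxiliary metric $k$. Fix a point $x$ and consider the closure $\overline{G_x}$ in $\GL(\NN_x)\simeq \GL(q,\R)$ of the holonomy group of loops based at $x$; by the uniform bound this closure is a \emph{compact} subgroup of $\GL(q,\R)$. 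A compact group acting linearly preserves an inner product on $\NN_x$, obtained by averaging $g_x$ against Haar measure on $\overline{G_x}$. Call this inner product $g^{\mathrm{inv}}_x$.

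Next I would propagate $g^{\mathrm{inv}}_x$ over the whole manifold. For $y$ in the leaf of $x$, set $g^{\mathrm{inv}}_y := (h_c^{-1})^* g^{\mathrm{inv}}_x$ for any $c$ joining $x$ to $y$; this is well-defined precisely because $g^{\mathrm{inv}}_x$ is invariant under the holonomy group of loops at $x$, and two choices of $c$ differ by such a loop. This defines an inner product on $\NN_y$ for every $y$ in the leaf, and by construction the resulting family is holonomy-invariant \emph{along that leaf}. To get a globally defined, smooth, holonomy-invariant transverse metric one needs that the assignment $x\mapsto g^{\mathrm{inv}}_x$ does not depend on a choice of representative within a leaf-closure in a way that breaks continuity: here I would invoke that the holonomy is locally linear, so on a flow box the holonomy maps between nearby transversals are the linear $h_c$, and the averaging (taken with a continuously varying family of compact holonomy groups, using the uniform operator bound to control the Haar measures) depends continuously — indeed smoothly — on the point. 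Concretely, one can instead define $g^{\mathrm{inv}}$ directly at each $x$ by averaging $g$ over $\overline{G_x}$ and then check, using the holonomy-invariance of $\nabla$ and the relation $H_c=h_c$, that $\bar H_c^* g^{\mathrm{inv}} = g^{\mathrm{inv}}$; smoothness follows since all ingredients ($g$, the connection, the exponential maps) are smooth and the operator bounds are locally uniform. Finally, a holonomy-invariant transverse metric is the same thing as a transverse $O(q)$-structure, i.e. $\F$ is a Riemannian foliation.

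The main obstacle I expect is the passage from the pointwise/leafwise averaged metric to a \emph{smooth} global transverse metric: one must argue that the closures $\overline{G_x}$ of the holonomy groups, and the Haar-averaging against them, vary smoothly (or at least continuously, which suffices to then re-run a smoothing argument compatible with holonomy-invariance) as $x$ varies, including across leaves whose closures are complicated. The uniform bound from equicontinuity on a compact $M$ is the key technical input that tames this: it confines all $\overline{G_x}$ to a fixed compact subset of $\GL(q,\R)$, so the averaged metrics stay in a compact family of inner products, and local linearity of the holonomy makes the dependence on $x$ manifestly smooth within each flow box. Patching these local invariant metrics is then automatic from the cocycle property of the holonomy.
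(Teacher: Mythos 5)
There is a genuine gap at the heart of your averaging scheme: you average over the wrong group. Equicontinuity does make the closure $\overline{G_x}$ of the linearized holonomy group of \emph{loops} at $x$ a compact subgroup of $\GL(\NN_x)$, and Haar-averaging $g_x$ over it yields an inner product invariant under holonomies of loops based at $x$. But holonomy-invariance of a transverse metric means invariance under the whole pseudo-group, i.e.\ under $h_c$ for \emph{all} leafwise paths $c$, and your verification $\bar H_c^* g^{\mathrm{inv}} = g^{\mathrm{inv}}$ fails: one only has $\overline{G_y} = h_c\, \overline{G_x}\, h_c^{-1}$, so $h_c^*\bigl(\mathrm{avg}_{\overline{G_y}}\, g_y\bigr) = \mathrm{avg}_{\overline{G_x}}\bigl(h_c^*\, g_y\bigr)$, and since the background $g$ is not holonomy-invariant this need not equal $\mathrm{avg}_{\overline{G_x}}\, g_x$. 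The degenerate case makes the failure plain: for an equicontinuous foliation all of whose leaves have trivial holonomy (a linear foliation of the torus, or a suspension of a minimal group of isometries acting freely), every $\overline{G_x}$ is trivial, the average returns the arbitrary metric $g$, and nothing is gained, yet the theorem is not vacuous there. Your fallback --- fixing one base point per leaf and propagating by $(h_c^{-1})^*$ --- produces a metric invariant only along each leaf separately, and when leaves are dense there is no continuous (let alone smooth) choice of base points, so the resulting field of inner products need not even be continuous across leaf closures. The uniform bound from equicontinuity confines all candidate inner products to a compact family, but it does not select a coherent, pseudo-group-invariant one: the real difficulty is to control the \emph{closure} of the holonomy pseudo-group, i.e.\ the dynamics of leaf closures, which the loop-holonomy groups $\overline{G_x}$ do not see.

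This is exactly what the paper's proof (following Wolak and Molino) is built to handle: lift $\G$ to the frame bundle $\mathrm{Fr}(TN/T\G)$, where the connection form and the solder form give a complete transverse parallelism; Molino's structure theorem then makes the closures of the lifted leaves the fibers of a fibration over a manifold $W$; equicontinuity enters only to show these closures are compact, hence the induced $\mathrm{GL}_q(\R)$-action on $W$ (and on the bundle $E$ of transverse foliate fields) is proper; invariant bundle metrics then exist because one averages over a \emph{proper action of an honest Lie group} rather than over a pseudo-group, and they descend to a holonomy-invariant transverse metric on $(N,\G)$. To salvage an averaging argument you would first have to replace the pseudo-group by such a group object (Molino's basic manifold $W$ with its $\mathrm{GL}_q(\R)$-action, or an appropriate closure of the pseudo-group); that is the step your proposal is missing.
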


Our proof will follow closely the one of \cite{Wol}. Before proceeding with the proof, we recall a few notions and a fundamental structure theorem.

\begin{definition}
On a foliated manifold $(N, \G)$, a vector field $Y$ is said to be {\em foliate} if for any $X \in T \G$ one has $[X,Y] \in T \G$.
\end{definition}

\begin{definition}
A foliation is said to be parallelizable if it admits a transverse $\{e\}$-structure, where $\{e\}$ is the trivial group with one element. The parallelism is said to be {\em transversely complete} if for any any vector $\bar X \in T M/T \F$ of the parallel basis, there exists a complete vector field $X \in TM$ which projects to $TM$.
\end{definition}

\begin{theorem}{\cite[Theorem 4.2']{Mol}} \label{TCfoliations}
Let $(N, \G)$ be a foliation of codimension $q$ admitting a transversely complete parallelism. Then, then closure of the leaves of $\G$ define a foliation $\bar\G$ of codimension $q_b$ which is induced by a submersion $\pi : N \to W$, where $W$ is a manifold. Moreover, for any $z \in W$, the foliation induced by $\G$ on $\bar \G_z := \pi^{-1} (z)$ has dense leaves and the space of foliate transverse vector fields of $(N_y, \G\vert_{\bar \G_z})$ is a Lie algebra of dimension $q - q_b$. In particular, if a transverse foliate vector field of the foliation $\G$ tangent to $\bar \G_z$ vanishes at a point, then it is zero on all $\bar \G_z$.
\end{theorem}

\begin{proof}[Proof of Theorem~\ref{Equi.Riem.Theorem}]
We consider a compact manifold $N^n$ together with a foliation of codimension $q$, endowed with a transverse holonomy-invariant connection. Let $\omega_T$ be the corresponding connection form on the frame bundle $\mathrm{Fr}(TN/T\G)$ and let $\theta_T$ be the transverse fundamental form of $\mathrm{Fr}(TN/T\G)$.

We consider the lifted foliation $\F_1$ on $\mathrm{Fr}(TN/T\G)$. It admits a transverse parallelism $\{\lambda_1, \ldots, \lambda_{n^2}, u_1, \ldots, u_n\}$ defined by
\begin{align*}
\omega_T (\lambda_i) = E_i, \ \theta_T (\lambda_i) = 0 && \textrm{and} && \omega_T (u_j) = 0, \ \theta_T (u_j) = e_j,
\end{align*}
where $(E_1, \ldots, E_{n^2})$ and $(e_1, \ldots, e_j)$ are the canonical bases of $\mathfrak{gl}_n(\R)$ and $\R^q$ respectively. This is a transverse parallelism because the connection is projectable (i.e. holonomy-invariant). In addition, this parallelism is complete because the $\lambda_i$'s are obviously complete and the $u_j$'s are represented by complete vector fields. Indeed, choose an arbitrary Riemannian metric on $N$ and identify $TN/T\G$ with the orthogonal of $T\G$. The connection $\omega_T$ defines a connection on $T \G^\perp$ which can be extended to a connection $\nabla$ on $TN$. Take the representative $\tilde u_j$ of $u_j$ given by the identification $T \G^\perp \simeq TN/T\G$, then its integral curves project on $M$ to geodesics of $\nabla$, which are defined for all times. Consequently, the foliation $\G_1$ admits a complete transverse parallelism, and we can apply Theorem~\ref{TCfoliations}: the closures of the leaves give a foliation $\bar \G_1$ induced by a submersion $\pi: \mathrm{Fr}(TN/T\G) \to W$.

In addition, the foliation $\G$ has an equicontinuous holonomy pseudo-group. This implies that for any $x \in N$, the intersection of a leaf of $\G_1$ with the fiber over $x$ is a relatively compact subset. Since $N$ is compact, the closure of a leaf of $\G_1$ is therefore compact. The action of $G := \mathrm{GL}_q(\R)$ on $\mathrm{Fr}(TN/T\G)$ sends the closure of a leaf to the closure of a leaf, so it descends to an action on $W$, and this action is proper because the closures of the leaves are compact.

We now construct a fiber bundle $E$ over $W$ whose fiber over $z \in W$ is the set of transverse foliate vector fields tangent to $\pi^{-1}(z)$ (which is a finite-dimensional vector space of dimension independent of $z$ according to Theorem~\ref{TCfoliations}). Since $G$ preserves the set of foliate vector fields tangent to the closure the leaves, $E$ admits a proper action of $G$.

Altogether, the vector bundles $TW \to W$ and $E \to W$ are both endowed with a proper $G$-action, so they admit $G$-invariant bundle Riemannian metrics. Summing these two metrics, we obtain a bundle Riemannian metric on $TW \oplus E \to W$ that we can pull-back to a bundle Riemannian metric on $T \mathrm{Fr}(TN/T\G) / T \G_1 \to \mathrm{Fr}(TN/T\G)$. The transverse metric defined this way is holonomy-invariant by definition. We reduce the fiber of $T \mathrm{Fr}(TN/T\G) / T \G_1 \to \mathrm{Fr}(TN/T\G)$ to the image of the horizontal vectors with respect to the connection $\omega_T$,  and the metric induced on this vector bundle is $G$-invariant, so it descends to a transverse holonomy-invariant metric on $(N, \G)$.
\end{proof}

\subsection{Proof of Theorem~\ref{Similarity}}

We are now in a position to complete the proof of Theorem~\ref{Similarity}. We assume that the foliation $\F$ on $M$ admits a transverse holonomy-invariant similarity structure. If there is a non-empty closed $\F$-invariant subset of $M$ where $\F$ is an equicontinuous lamination, then the equicontinuity domain of $M$ is non-empty, and it is the whole manifold $M$ by Proposition~\ref{Closed}. Applying Theorem~\ref{Equi.Riem.}, the foliation $\F$ is Riemannian.

Conversely, if $\F$ is not Riemannian, its equicontinuity domain must be empty by the contrapositions of Proposition~\ref{Closed} and Theorem~\ref{Equi.Riem.}. Using the contraposition of Proposition~\ref{non-flat curvature}, the curvature of the metric $[g_T]$ is everywhere zero, i.e. it is a flat transverse metric.

\subsection{Proof of Theorems~\ref{No Flat} and \ref{With Flat}} \label{deRhamsection}

Here, we assume that $\F$ admits a transverse holonomy-invariant locally metric connection $\nabla$. We first need to define precisely what we mean by the de Rham decomposition of the transversal. By definition, $\nabla$ is a connection on the normal bundle $TM/\F \to M$, so there is a decomposition $N \F =: T_1' \oplus \ldots \oplus T_m'$ into $\nabla$-invariant subspaces, i.e. invariant by the holonomy of the connection $\nabla$. Assume first that there is no flat factor in this decomposition. The pre-images of these subspaces in $TM$ are denoted by $T_i$ and one has:

\begin{lemma}
The distribution $T_i$ is involutive.
\end{lemma}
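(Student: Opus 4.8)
The plan is to reduce the involutivity of $T_i$ to two properties of the transverse connection $\nabla$: that $\nabla$ is torsion-free (this is part of the hypothesis that $\nabla$ is locally metric, Definition~\ref{transverselocmetric}), and that the subbundle $T_i' \subset N\F$ is $\nabla$-parallel. The latter holds because $T_i'$ is, by construction, a factor of the holonomy decomposition of $N\F$, hence invariant under parallel transport of $\nabla$.

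First I would make the torsion-free condition explicit in the transverse setting. As recalled in the Remark following the definition of a transverse connection, the torsion of $\nabla$ is the covariant exterior derivative of the solder form $\theta_T$, and therefore descends to a genuine tensor valued in $N\F$; unwinding this, for all $X, Y \in \Gamma(TM)$ one has $T(\bar X, \bar Y) = \nabla_X \bar Y - \nabla_Y \bar X - \overline{[X,Y]}$, where $\bar Z$ denotes the image of $Z$ under the projection $TM \to N\F$. The point to verify here is that this right-hand side is tensorial in $\bar X$ and $\bar Y$ and does not depend on the chosen lifts of $\bar X, \bar Y$ to $TM$: this uses precisely the Bott property $\nabla_Z \bar Y = \overline{[Z,Y]}$ for $Z \in \Gamma(T\F)$. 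Since $\nabla$ is locally metric, $T \equiv 0$, so $\overline{[X,Y]} = \nabla_X \bar Y - \nabla_Y \bar X$ for all $X, Y \in \Gamma(TM)$.

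The lemma then follows immediately: if $X, Y \in \Gamma(T_i)$, then by definition of $T_i$ as the preimage of $T_i'$ we have $\bar X, \bar Y \in \Gamma(T_i')$; since $T_i'$ is invariant under the holonomy of $\nabla$, hence $\nabla$-parallel, both $\nabla_X \bar Y$ and $\nabla_Y \bar X$ are again sections of $T_i'$, and therefore so is $\overline{[X,Y]}$. This means exactly that $[X,Y] \in \Gamma(T_i)$, so $T_i$ is involutive.

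I expect no serious obstacle beyond bookkeeping; the only delicate step is the first one, namely checking that the transverse torsion is a well-defined tensor and that its vanishing yields the displayed Koszul-type identity for \emph{arbitrary} vector fields $X,Y$ (not merely foliate or transversely projectable ones), which is where the Bott property is needed to absorb the ambiguity in lifting. I would also note that compactness of the reduced holonomy plays no role in this particular lemma — only torsion-freeness does; compactness enters later, to guarantee that the decomposition $N\F = T_1' \oplus \cdots \oplus T_m'$ exists with the factors being a flat part and irreducible pieces.
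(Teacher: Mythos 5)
Your proof is correct and follows essentially the same route as the paper: both arguments rest on the torsion-freeness of $\nabla$ together with the fact that $T_i'$, being a holonomy factor, is $\nabla$-parallel, so that $\overline{[X,Y]}$ stays in $T_i'$ and hence $[X,Y]\in T_i$. The only cosmetic difference is that the paper packages the identity via an auxiliary extended connection $\nabla^0=\nabla\oplus\nabla^{\F}$ on $TM$ whose torsion takes values in $T\F$, whereas you work intrinsically with the transverse torsion tensor on $N\F$, using the Bott property to check it is well defined --- a point you handle correctly.
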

\begin{proof}
Let $\nabla^\F$ be any connection on $T \F$. One has $TM \simeq N \F \oplus T\F$ and we consider the linear connection $\nabla^0 := \nabla \oplus \nabla^\F$. Let $X,Y \in T_i$. Since $\nabla$ is torsion-free, the torsion of $\nabla^0$ has values in $T \F$ and we deduce that there is $Z \in T \F$ such that
\[
[X,Y] = \nabla_X Y - \nabla_Y X + Z \in T_i. \qedhere
\]
\end{proof}

Let $\F'$ be the foliation induced by the distribution $\bigoplus_{i \ge 2} T_i \oplus T \F$. The connection $\nabla$ descends to a transverse connection $\nabla'$ on the normal bundle $TM / T \F'$, which is still locally metric and is preserved by the holonomy pseudo-group of the foliation $T \F'$. Moreover, $\nabla'$ has irreducible holonomy, and using Remark~\ref{transverse irrlocallymetric}, there is a transverse holonomy-invariant Riemannian metric $g_T'$ on the universal cover $\tilde M$ of $M$ such that the lifted connection $\tilde \nabla'$ is the Levi-Civita transverse connection of $g_T$ for the lifted foliation $\tilde \F'$. Since $\pi_1(M)$ acts by transversal similarities on $(\tilde M, \tilde \F',  g_T')$, $g_T'$ induces a transverse similarity structure on $M$. For any holonomy map $\gamma$ of the foliation $\F'$, $\gamma$ lifts to a holonomy map of $\tilde \F'$ which acts as a transverse isometry, so $\gamma$ preserves the similarity structure which is then holonomy-invariant. It is also non-flat since we assume there is no flat factor in the de Rahm decomposition. It remains to apply Theorem~\ref{No Flat} to get that $\F'$ is Riemannian, and we obtain a Riemannian metric on the distribution $T_1'$.

Iterating this process for all $i$'s one after the other and summing the metric obtained this way, we get a transverse holonomy-invariant Riemannian metric for the foliation $\F$, finishing the proof of Theorem~\ref{No Flat}.

To show Theorem~\ref{With Flat}, we remark that the pre-image of the flat distribution $ $ by the projection onto $TM/ T\F$ is again involutive, and we replace $\F$ by the foliation $\F^0$ induced by this new distribution. The transverse holonomy-invariant locally metric connection descends to the new transverse structure and has no flat factor in its de Rham decomposition. We can apply Theorem~\ref{No Flat} to conclude.

\subsection{Proof of Theorem~\ref{De Rham decomposition}}

In this section, $M$ is a compact manifold admitting a locally metric connection $\nabla$. By Remark~\ref{transverse irrlocallymetric}, there exists metric on the universal cover $\tilde M$ of $M$ whose Levi-Civita connection is the lift $\tilde \nabla$ of $\nabla$. Let $h$ be such a metric. Let $T \tilde M =: \tilde T_0 \oplus \tilde T_m$ be the decomposition of $TM$ into $\tilde \nabla$-holonomy-invariant subspaces such that $\tilde T_0$ is flat and the $\tilde T_i$ for $i \ge 1$ are irreducible. If there is only one factor, there is nothing to prove, so we assume this decomposition has at least two factors. We define two transverse foliations $\tilde \F$ and $\tilde \TT$ by $T \tilde\F := \tilde T_m$ and $T \tilde\TT := \bigoplus_{i = 0}^{m-1} T_i$.

Since the elements of $\pi_1(M)$ act by affine transformations on $(\tilde M, \tilde \nabla)$, they preserves this decomposition up to a permutation of the factors. There is a finite cover of $M$ on which the permutations are trivial, and we can assume without loss of generality that $M$ is this covering. Thus the distributions $\tilde T_i$ descend to distributions $T_i$ on $M$ and the foliations $\tilde \F$ and $\tilde \TT$ descend respectively to foliations $\F$ and $\TT$ on $M$.

The connection $\nabla$ descends naturally to a transverse locally-metric connection $\nabla^T$ for the foliation $\F$. This connection $\nabla^T$ is holonomy-invariant (with respect to the holonomy pseudo-group of $\F$). Indeed, for any point $x \in M$, one can take a small neighbourhood $U$ of $x$ such that the metric $h$ descends to a metric $g$ on $U$, and by the local de Rham theorem there exists, up to a restriction of $U$, an isometry $\varphi : (U, g) \to (F, g\vert_{T F}) \times (T, g\vert_{T T})$ where the product respects locally the two foliations $\F$ and $\TT$ (i.e. $T F = T \F$ and $T T = T \TT$). Moreover, $\nabla^T$ is the (transverse) Levi-Civita connection of $g\vert_{T T}$, so it is invariant by sliding along the leaves of $\F$ in $U$, thus globally invariant by the holonomy pseudo-group.

Altogether, we have a compact manifold $M$ with a foliation $\F$ and a transverse locally metric holonomy-invariant connection $\nabla^T$. In addition, $\nabla^T$ is non-flat and irreducible, so we can apply Theorem~\ref{No Flat} to obtain a transverse holonomy-invariant Riemannian structure on $(M, \F)$, i.e. a holonomy invariant metric $g_m$ on $T_m$. Iterating this construction by taking an arbitrary $\tilde T_i$, $i \ge 1$, for $\tilde \F$ instead of $T_m$, one has Riemannian metrics $g_i$ on all $T_i$'s, which are invariant by sliding along the leaves of the other $T_j$ for $0 \le j \le m$. The singular metrics defined this way lift to metrics $\tilde g_i$ on $\tilde M$.

Assume first that the flat factor is non-trivial. We will apply the following generalization of the de Rham decomposition theorem proved in \cite[Theorem 1]{PR93}:

\begin{theorem}[Ponge-Reckziegel] \label{PR}
Let $(N,g_N)$ be a simply connected pseudo-Riemannian manifold with two orthogonal foliations $L$ and $K$. Assume that the leaves of $L$ are totally geodesic and geodesically complete and that the leaves of $K$ are totally geodesic, then $(N, g_N)$ is isomorphic to a product $(N_1, g_{N_1}) \times (N_2, g_{N_2})$ such that $L$ and $K$ correspond to the foliations induced respectively by $N_1$ and $N_2$ on $N$.
\end{theorem}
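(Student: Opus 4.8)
The plan is to reduce Theorem~\ref{PR} to the classical de Rham splitting machinery, the only genuine difficulty being that $g_N$ need not be complete, so that the completeness hypothesis on the leaves of $L$ has to take over the role usually played by completeness of the ambient metric. First I would record two algebraic facts. The two foliations are complementary, $TN = TL \oplus TK$, and each summand is non-degenerate for $g_N$: since orthogonality plus a dimension count give $TK = (TL)^{\perp}$, a null vector of $g_N|_{TL}$ would be $g_N$-orthogonal to $TL$ and to $TK$, hence to all of $TN$, hence zero. Secondly, the hypothesis that $L$ and $K$ are both totally geodesic is equivalent to $TL$ and $TK$ being parallel for the Levi-Civita connection $\nabla$ of $g_N$: for $X,Y\in\Gamma(TL)$ and $Z\in\Gamma(TK)$ one has $g_N(\nabla_X Z,Y)=X\,g_N(Z,Y)-g_N(Z,\nabla_X Y)=0$ because $g_N(Z,Y)\equiv 0$ and $\nabla_X Y\in TL$ (total geodesy of $L$), whence $\nabla_X Z\in(TL)^{\perp}=TK$; the symmetric computation using total geodesy of $K$ gives $\nabla_Z X\in TL$, and $\nabla_X Y\in TL$, $\nabla_Z W\in TK$ are immediate, so $TL$ and $TK$ are $\nabla$-parallel. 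The local de Rham theorem, valid in the pseudo-Riemannian category for a splitting into non-degenerate parallel subbundles, then shows that near every point $(N,g_N)$ is a direct pseudo-Riemannian product whose factor foliations are $L$ and $K$; in particular the mixed curvature $R(X,Z)$ with $X\in TL$, $Z\in TK$ vanishes.

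Next I would globalise. Fix $p\in N$ and let $\Lambda:=L_p$ and $F:=K_p$ be the leaves through $p$, so $\Lambda$ is totally geodesic and geodesically complete while $F$ is totally geodesic. I define $\Phi:\Lambda\times F\to N$ by development. Given $x\in\Lambda$, pick a broken geodesic $\alpha$ in $\Lambda$ from $p$ to $x$ (possible since $\Lambda$ is connected and nearby points are joined by geodesics via the exponential map); given $z\in F$, replay $\alpha$ from $z$ inside the leaves: each geodesic segment of $\alpha$, whose initial velocity lies in $TL$, is reproduced by a geodesic segment inside the $L$-leaf through the current point with the correspondingly developed (parallel-transported) initial velocity. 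By total geodesy and geodesic completeness of the $L$-leaves each such segment is defined for all parameter values, so the replay never leaves $N$; set $\Phi(x,z)$ to be its endpoint. By construction $\Phi(x,z)$ lies in the $L$-leaf through $z$ and in the $K$-leaf through $x$, and $\Phi(\cdot,p)=\mathrm{id}_{\Lambda}$, $\Phi(p,\cdot)=\mathrm{id}_{F}$. Well-definedness of $\Phi$ — independence of the broken geodesic and of the transport chosen — is the monodromy computation of the de Rham theorem: the replay is horizontal transport for the flat Ehresmann connection $TL$ on the local fibrations by $K$-leaves, flat because the mixed curvature vanishes, so since $N$ is simply connected there is no monodromy.

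Finally I would verify that $\Phi$ is an isometry onto $N$. It is a local isometry from the product metric on $\Lambda\times F$, since near any point it agrees with the local product charts obtained in the first paragraph. Using geodesic completeness of the $L$-leaves once more, $\Phi$ has the path-lifting property and its image is open and closed, hence equal to $N$; therefore $\Phi$ is a covering map of the simply connected manifold $N$ by the connected manifold $\Lambda\times F$, hence a diffeomorphism, hence a global pseudo-Riemannian isometry $(N,g_N)\cong(\Lambda,g_N|_{T\Lambda})\times(F,g_N|_{TF})$ carrying $L$ and $K$ onto the two product foliations. This is the asserted decomposition, with $N_1=\Lambda$ and $N_2=F$.

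I expect the main obstacle to be exactly the well-definedness of the development $\Phi$ in the third paragraph: one must run the classical de Rham globalisation while only assuming completeness of the $L$-leaves and not of $g_N$, cancelling the potential monodromy of the transport using simple-connectedness of $N$ together with the vanishing of the mixed curvature established in the first paragraph. The remaining ingredients — the algebra and local product structure, and the covering-space argument — are routine.
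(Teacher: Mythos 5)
You should first be aware that the paper does not prove this statement at all: it is imported verbatim as Theorem~\ref{PR} from Ponge--Reckziegel \cite{PR93} and used as a black box in the proof of Theorem~\ref{De Rham decomposition}. So your proposal is really a reconstruction of the proof in \cite{PR93}, and in outline it does follow the known strategy: your first paragraph (non-degeneracy of $TL$ and $TK$, equivalence of total geodesy of both foliations with parallelism of the two distributions, local de Rham splitting, vanishing of the mixed curvature) is correct and routine, and the idea of globalising by developing broken geodesics of a complete $L$-leaf, then concluding by a covering argument over the simply connected $N$, is the right one.

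The problem is that the globalisation --- which is the entire content of the theorem --- is asserted rather than proved, and you flag this yourself. Concretely: (i) the ``parallel-transported initial velocity'' at $z$ is not canonically defined. The connection on $TL$ restricted to the $K$-leaf $F$ is flat, but $F$ need not be simply connected, so transport from $p$ to $z$ inside $F$ can have monodromy; simple connectedness of $N$ does not remove it directly, because $TL\to N$ is not flat over all of $N$ (its curvature in $L$-directions is the leaf curvature), so contracting a loop of $F$ through $N$ says nothing about transport inside $F$. Removing the ambiguity requires the full rectangle construction $R(s,t)=\exp_{\tau(t)}\bigl(s\,P_{\tau|_{[0,t]}}v\bigr)$ together with a homotopy-lifting argument in $N$, in which completeness of the $L$-leaves is what keeps every intermediate development defined; none of this is carried out. (ii) The claim that $\Phi(x,z)$ lies ``by construction'' in the $K$-leaf through $x$ is not by construction: $\alpha(t)$ and its replay are in general far apart, so no single product chart contains both, and showing the development tracks the correct $K$-leaves is exactly the propagation-along-the-rectangle argument the theorem needs. (iii) The final step is not off the shelf either: $\Lambda\times F$ is not complete ($F$ may be incomplete) and there is no Hopf--Rinow theorem in the pseudo-Riemannian setting, so ``path-lifting property, image open and closed'' requires an argument that lifts cannot escape through the Cauchy boundary of $F$, using that holonomy along $L$-paths is isometric on $K$-plaques. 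As it stands the proposal is an outline of the strategy of \cite{PR93} with its central steps left open; for the purposes of this paper the honest course is the authors' one, namely to cite \cite{PR93}.
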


We consider the two foliations $L$ and $K$ on $\tilde M$ defined by $T L := \bigoplus_{i \ge 1} \tilde T_i$ and $T K := \tilde T_0$. We define a new metric $h'$ on $\tilde M$ by setting $h' = h\vert_{T_0} \oplus \tilde g_1 \oplus \ldots \oplus \tilde g_m$. The foliations $L$ and $K$ are orthogonal and totally geodesic because $(\tilde M, h')$ is locally a Riemannian products of integral manifolds of the two foliations. In addition, $L$ is geodesically complete because its geodesics descend to geodesics of the induced foliation on $M$ together with the Riemannian structure we constructed above, and this is geodesically complete by compactness of $M$.

Applying Theorem~\ref{PR}, the manifold $(\tilde M, h')$ is isometric to a product $(M_1, g_1) \times (M_2, g_2)$ where the foliations induced by $M_1$ and $M_2$ are respectively $L$ and $K$. Moreover, $(M_1, g_1)$ is simply-connected and complete, and the metric $g_1$ is, by construction, a local Riemannian product. Applying the usual de Rham decompostion theorem, we obtain that $M_1$ is a Riemannian product. Finally, taking the product decomposition of $\tilde M$ obtained this way together with the metric induced by $h$ on this decomposition, we obtain the de Rham decomposition we were seeking.

In the case where the flat factor is trivial, we simply apply this last step of the proof. This conclude this section.

\section{Closed non-exact Weyl structures} \label{SectWeyl}

In this section, we apply the result of Theorem~\ref{De Rham decomposition}, i.e. the existence of a de Rham decomposition, in order to prove a remarkable structure theorem for compact conformal manifolds admitting a particular connection called a {\em Weyl connection}. From now on, $M$ is a compact manifold, and we endow it with a conformal structure $c$ in the following sense:

\begin{definition}
A conformal structure on $M$ is a set $c$ of Riemannian metrics such that for any $g,g' \in c$, there exists a smooth function $f : M \to \R$ such that $g = e^{2f} g'$.
\end{definition}

Conformal manifolds admit a paricular class of connection, called {\em Weyl connections} which generalize the notion of Levi-Civita connection in the conformal setting:

\begin{definition}
A Weyl connection on the conformal manifold $(M,c)$ is a torsion-free connection $D$ such that $D$ preserve the conformal structure, i.e. for any $g \in c$ there exists a $1$-form $\theta_g$ on $M$ such that $D g = -2 \theta_g \otimes g$. The $1$-form $\theta_g$ is called the Lee form of $D$ with respect to $g$. The triplet $(M,c,D)$ is called a {\em Weyl connection}.

A Weyl structure is said to be {\em closed} if its Lee form with respect to one metric, and then to all metrics in $c$, is closed. In this case, $D$ is locally the Levi-Civita connection of a metric in $c$.

A Weyl structure is said to be {\em exact} if its Lee form with respect to one metric, and then to all metrics in $c$, is exact. In this case, $D$ is the Levi-Civita connection of a metric in $c$.
\end{definition}

We assume that there is a closed, non-exact Weyl connection $D$ on the conformal manifold $(M,c)$. Once lifted to the universal cover $\tilde M$ of $M$, the conformal structure $c$ and the Weyl connection $D$ induce a conformal structure $\tilde c$ and a Weyl connection $\tilde D$ on $(\tilde M, \tilde c)$. If $g$ is a metric in $c$, the Lee form of $\tilde D$ with respect to the lifted metric $\tilde g$ to $\tilde M$ is the pull-back $\tilde \theta_g$ of the Lee form $\theta_g$ to $\tilde M$, which is then exact. This means that $\tilde D$ is an exact Weyl connection, and there exists a metric $h$, unique up to a positive mutiplicative constant, such that $\tilde D = \nabla$ where $\nabla$ is the Levi-Civita connection of $h$. In particular, there exists $f : \tilde M \to \R$ such that $h = e^{2f} \tilde g$ and $\tilde \theta_g = df$. If we pick any $\gamma \in \pi_1(M)$, one has $\gamma^* \tilde \theta_g = \tilde \theta_g$, implying that $\gamma^* df = df$ and $\gamma^* f = f + \lambda_\gamma$ where $\lambda_\gamma \in \R$. Hence, $\pi_1(M)$ acts by similarities on $(\tilde M,h)$, and these similarities are not all isometries because otherwise $h$ would descend to $M$, but this is impossible because $D$ is non-exact.

The Weyl connection $D$ is a locally metric connection of $M$ and $h$ is a metric on $\tilde M$ whose Levi-Civita connection is $\tilde D$, so we can apply Theorem~\ref{De Rham decomposition} and we infer that $(\tilde M, h)$ has a de Rham decomposition $(M_0, h_0) \times \ldots \times (M_p, g_p)$. We assume that $(\tilde M, h)$ is neither irreducible nor flat and we denote by $(N,h_N)$ the Riemannian product $(M_1, g_1) \times \ldots \times (M_N,g_N)$.

\begin{lemma}
The Riemannian manifold $(N,h_N)$ is irreducible.
\end{lemma}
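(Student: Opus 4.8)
The plan is to combine the canonicity of the de Rham decomposition with the fact that $\pi_1(M)$ acts on $(\tilde M,h)$ by similarities. Homotheties preserve the Levi--Civita connection, hence are affine, and the de Rham decomposition $(\tilde M,h)\cong (M_0,h_0)\times (M_1,g_1)\times\dots\times(M_p,g_p)$ is canonical: the flat factor and the individual irreducible non-flat factors are parallel subbundles determined intrinsically, so the decomposition is unique up to reordering the non-flat factors and regrouping the flat one. Therefore every $\gamma\in\pi_1(M)$, acting as a similarity with $\gamma^*h=\mu_\gamma^2 h$ for some $\mu_\gamma>0$, sends $M_0$ onto itself and permutes $M_1,\dots,M_p$ by a permutation $\sigma_\gamma$; accordingly $\gamma$ splits, up to this permutation, as a product of similarities $M_i\to M_{\sigma_\gamma(i)}$ together with a similarity of $M_0$, all of the common ratio $\mu_\gamma$. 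In particular $\pi_1(M)$ preserves the splitting $\tilde M=M_0\times N$ and acts on $(N,h_N)$ by similarities.

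Since the Weyl structure is non-exact, some $\gamma_\ast\in\pi_1(M)$ satisfies $\mu_{\gamma_\ast}\neq 1$; replacing $\gamma_\ast$ by its inverse we may assume $\mu_{\gamma_\ast}>1$, and replacing it by the power $\gamma_\ast^{\mathrm{ord}(\sigma_{\gamma_\ast})}$ (the group $S_p$ being finite) we may assume in addition that the induced permutation is trivial. Then every factor $M_i$ with $1\le i\le p$ carries a self-homothety $\psi_i$ of ratio $c>1$, and so does $M_0\cong\R^q$, the latter necessarily with a unique fixed point.

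It remains to show that this forces $p=1$. Assume $p\ge 2$. The key step — and the one I expect to be the main obstacle — is the rigidity statement that an irreducible non-flat de Rham factor cannot carry a self-homothety of ratio $\neq 1$. The underlying classical fact is that a \emph{complete} Riemannian manifold admitting a homothety $\psi$ of ratio $c>1$ is flat: the contraction $\psi^{-1}$ has a fixed point $p_0$ (its iterates are Cauchy, by completeness), and comparing $\psi$ with its linearisation $cA$, $A$ orthogonal, at $p_0$ through $\exp_{p_0}$ shows that $\exp_{p_0}^\ast g$ is a metric on $T_{p_0}P$ invariant under the linear contraction $(cA)^{-1}$, hence, being smooth at the origin, flat. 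The difficulty is that the factors $M_i$ are a priori incomplete, so one must transport this argument to them: the plan is to run it inside the complete model used in the proof of Theorem~\ref{De Rham decomposition} — where the non-flat part of $(\tilde M,h)$ was realised, via Theorem~\ref{PR} and the classical de Rham theorem, as a complete simply connected Riemannian manifold on which $\pi_1(M)$ still acts by transverse affine transformations — or else to exploit Propositions~\ref{Propagation} and~\ref{Closed}, through which the $\psi_i^{-1}$-orbits accumulate on a compact $\pi_1(M)$-invariant region where the exponential argument can be localised. Either way, applying the conclusion to one of the (at least two) non-flat factors contradicts its being irreducible and non-flat; hence $p=1$, and $(N,h_N)=(M_1,g_1)$ is irreducible.
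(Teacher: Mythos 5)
Your first step (deck transformations are affine, preserve the de Rham splitting up to a finite permutation, hence act on $(N,h_N)$ by similarities, and some power of a non-isometric element acts factorwise with a common ratio $c>1$) is correct and agrees with the opening of the paper's proof. The problem is the step you yourself flag as the key one: the ``rigidity statement'' that an irreducible non-flat de Rham factor cannot carry a self-homothety of ratio $\neq 1$ is simply false for incomplete manifolds, and no amount of transporting the classical argument will rescue it. The classical fact needs completeness precisely to produce a fixed point of the contraction; without it there are standard counterexamples (any non-flat metric cone $(\R_{>0}\times S,\ dr^2+r^2g_S)$ is irreducible for generic $S$ and carries the dilations $r\mapsto cr$). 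Worse, your claim would prove too much: in the situation of the lemma the \emph{single} factor $N$ (the case $p=1$) also carries a self-homothety of ratio $\neq 1$, obtained exactly as in your reduction, so your argument would rule out $p=1$ as well and hence the entire third case of Theorem~\ref{UCoverStructure} --- contradicting the existence of LCP structures \cite{MN15,MN17}. The fallback routes you sketch do not help either: the complete metric $h'$ built in the proof of Theorem~\ref{De Rham decomposition} is assembled from $\pi_1(M)$-invariant transverse metrics, so the deck transformations are no longer homotheties of ratio $\neq 1$ for it on the non-flat part, and flatness with respect to $h'$ would say nothing about $h$; Propositions~\ref{Propagation} and~\ref{Closed} concern equicontinuity domains of foliations and give no fixed point for $\psi_i^{-1}$.

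What is actually needed --- and what the paper does --- is an argument that uses a splitting $N=N_1\times N_2$ into \emph{two} factors together with the cocompactness of the $\pi_1(M)$-action, not merely the existence of a homothety on each factor. If one factor were complete, the corresponding component of a non-isometric similarity would have a fixed point and force that factor to be flat $\R^n$, which is excluded; so both factors are incomplete. Then one compares distances to the Cauchy boundaries $\partial N_1,\partial N_2$: cocompactness gives compact sets $K_1,K_2$ with $\Gamma\cdot(K_1\times K_2)=N$ and $\alpha\le d_i(K_i,\partial N_i)\le\beta$, a similarity of ratio $\lambda$ rescales both boundary distances by the same $\lambda$, and a point chosen very close to $\partial N_1$ but very far from $\partial N_2$ forces $\lambda\le 1/2$ and $\lambda\ge 2$ simultaneously. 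This two-factor boundary-distance mechanism is the content of the lemma and is entirely absent from your proposal, so as it stands the proof has a genuine gap at its central step.
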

\begin{proof}
By contradiction we assume that $(N, h_N)$ is reducible, so it can be written as a product $(N_1, h_1) \times (N_2, h_2)$ where $N_1$ and $N_2$ have positive dimension. Moreover, the group $\pi_1(M)$ acts by similarities on $(\tilde M, h)$, and in particular it contains only affine maps, which then preserve the de Rham decomposition of $(\tilde M, h)$ up to a permutation of the factors. Thus $\pi_1 (M)$ preserve the decomposition $(M_0, g_0) \times (N, h_N)$ so it projects to a group $\Gamma$ of similarities of $(N, h_N)$.

Let $\gamma$ be a non-isometric similarity in $\Gamma$, which exists because $\pi_1(M)$ does not only contain isometries. We can assume, up to taking a power of $\gamma$, that $\gamma$ preserves the decomposition $(N_1, h_1) \times (N_2, h_2)$ and it can be written as $(\gamma_1, \gamma_2)$ where $\gamma_1$ and $\gamma_2$ act on $N_1$and $N_2$ respectively. If $(N_1, h_1)$ is complete, then $\gamma_1$ has a fixed point, but a manifold admitting a similarity with a fixed point is isometric to $\R^n$, which is not possible since $(N, h_N)$ does not contain a flat factor. We can thus assume that $(N_1, h_1)$ and $(N_2, h_2)$ are both incomplete.

We recall that the Cauchy border of a Riemannian manifold $\mathcal N$ is $\partial \mathcal N := \bar{\mathcal N} \setminus \mathcal N$ where $\bar{\mathcal N}$ is the metric completion of $\mathcal N$. The Cauchy border is preserved by any similarity of $\bar{\mathcal N}$, and any similarity of $\mathcal N$ extends to a similarity of $\bar{\mathcal N}$ by density.

 Since $\Gamma$ acts cocompactly on $N$, there exist compact subsets $K_1$ and $K_2$ of $N_1$ and $N_2$ respectively such that  
 $\Gamma \cdot (K_1  \times K_2) = N_1 \times N_2$.
 By compactness, there exist
 $0 < \alpha \le \beta$ such that, if we denote by $d_1$ and $d_2$ the induced distances on the metric completions of $N_1$ and $N_2$ respectively, one has $\alpha \le d_i (K_i, \partial N_i) \le \beta$ for $i = 1,2$. Then if we define, for $i = 1,2$,
\[
D_i := \{ x \in \bar N_i \ \vert \ \alpha \le d_i(\partial N_i, x) \le \beta \}
\]
one has $\Gamma \cdot (D_1 \times D_2) = N_1 \times N_2$. We choose any $(x_1, x_2) \in N_1 \times N_2$ such that $d_1 (\partial N_1, x_1) = \alpha/2$ and $d_2 (\partial N_2, x_2) = 2 \beta$, which exists by connectedness of $N$ and because $\alpha$ is a similarity of ratio different from $1$. By the cocompactness of the action of $\Gamma$, there exists $\gamma \in \Gamma$ such that $(x_1, x_2) \in \gamma (K_1 \times K_2)$ and $\gamma$ has ratio $\lambda > 0$. Since the Cauchy border of $N$ is preserved by $\gamma$, one has $\lambda \alpha \le d_1(\partial N_1, \gamma(K_1)) \le \lambda \beta$ and $\lambda \alpha \le d_2(\partial N_2, \gamma(K_2)) \le \lambda \beta$, hence $\lambda \le 1/2$ and $2 \le \lambda$: contradiction.
\end{proof}

Since we assumed that $(\tilde M, h)$ is neither irreducible nor flat, $N$ has positive dimension and $(M_0, h_0)$ is isometric to an Euclidean space $\R^q$ with $q \ge 1$. Summarizing, we proved Theorem~\ref{UCoverStructure}.

\section {Foliation by torii} \label{FoliationTorii}

In this section we study the third case occurring in Theorem~\ref{UCoverStructure}, namely we consider $(M,c,D)$ a compact conformal manifold endowed with a closed non-exact Weyl structure. Its universal cover is $\tilde{M}$ and Theorem~\ref{UCoverStructure} gives us a Riemannian metric $h$ on $\tilde M$ such that the lift of $D$ to $\tilde M$ is the Levi-Civita connection of $h$ and $(\tilde M, h)$ is isometric to $\R^q \times (N, g_0)$ where $(N, g_0)$ is irreducible and incomplete.

Let $P$ be the projection of $\pi_1(M)$ on $\Sim(N)$,  $\bar{P}$ its closure and $\bar{P}^0$ its identity component. It was proved in \cite{Kou} that ${\bar P}^0$ is abelian. Then, the foliation induced by the submersion $\tilde M \simeq \R^q \times N \to N$ descends to a foliation $\F$ on $M$ and one can prove that the closures of the leaves of $\F$ are finitely covered by flat tori.

Our goal in this section is to prove that we actually have, up to some modifications on the manifold $M$, a foliation by torii, i.e. the foliation by the closures of the leaves of $\F$ can also be taken to be non-singular.

\subsection{How to make $\Sim(N)$ and hence $\bar{P}$ acting freely on $N$}

Let $\st(N)$ be the space of conformal frames of $N$. This is a principal $(\R^* \times \SO(d))$-bundle $(d = \dim N$) (Here $\st$ stands for Steifel).
 We see it as a subset of the  space of $d$-systems of vectors 
$\{v_1, \ldots, v_d\}$,  that is the sum $(TN)^d := TN \oplus \ldots  \oplus TN$ of $d$-copies of $TN$.  This bundle 
has a natural metric for which $\Sim(N)$ acts by similarity.  Indeed, its tangent space has a  horizontal-vertical splitting 
$\mathcal H \oplus \V$, where $\mathcal H$ is the horizontal given by the Levi-Civita connection of $g_0$, and 
$\V$ is the vertical space of the fibration. One endows $\mathcal H$ with the metric making the projection a Riemannian submersion, i.e
it sends isometrically  $\mathcal H$ to $TN$. The vertical  at a point $x \in N$ is identified to $T_xN \oplus \ldots \oplus T_xN$, and is endowed 
with the product metric.

One sees that $\Sim(N)$ acts by similarity on $(TN)^d$ and hence also on $\st(N)$.

Now, $\Sim(N)$ acts properly on $N$ and preserves a metric $g_1 $ conformal to $g_0$. Thus, $\hat{N}$, the space of $g_1$-orthonormal  frames is a $\O(d)$-bundle over $N$ and a sub-bundle of $(TN)^d$. It is preserved by  $\Sim(N)$ and $\Sim(N)$ acts on it by similarities.

We remark that the case where $\dim N = 2$ was classified in \cite{Kou}, and in this case $\bar P$ already acts freely on $N$, so the construction we are currently investigating is not needed. In the case $\dim N > 2$, the fundamental group of $\hat N$ is either $\{ 0 \}$ or $\mathbb Z / 2 \mathbb Z$, thus $\hat N$ is finitely covered by its universal cover and we can assume that $\hat N$ is simply connected without modifying too much the group acting cocompactly on $\tilde M$, i.e. $\pi_1(M)$.

The idea is to replace $\tilde{M} $ by $\hat{ \tilde {M}} =  \R^q \times \hat{N}$, and $M$ by 
$\hat{M} $, the quotient of $\hat{ \tilde {M}}$ by the $\pi_1(M)$-action. So, $\hat{M}$ is a principal $\SO(d)$-fibration 
over $M$.

\subsection{How to get rid of compact normal  subgroups of $\bar{P}$} In this subsection we show that one can restrict to the case where ${\bar P}^0$ is $\R^k$ for some $k$. This construction will not be used in the sequel, but it is interesting since it reduces the problem of the classification to a better-understood setting \cite{FlaLCP2}.

Assume  as above that $\bar{P}$ acts freely on $N$ and that $\bar{P}$ has a
compact  normal subgroup $K$. Then, we propose to modify $N$ to 
$\underline{N}$ which is the quotient of $N$ by $K$. This acts freely and properly \cite[Lemma 4.9]{Kou}, and thus the quotient is a manifold, and $\bar{P}/K$ acts naturally on this quotient. Moreover, this action is free since $K$ acts trivially on $\underline{N}$.

\subsection{Foliation by torii} 
Assume first that $\bar{P}$ acts freely (if not replace $N$ by $\hat{N}$).
Let $\F$ be the foliation of $M$ given by the factor $\R^q$. The closure of leaves of $\F$ is a singular foliation $\bar{\F}$ of $M$. 
Its lift to $\R^q \times N$ is given by orbits of the group $Q= \R^q \times \bar{P}^0  \subset \Sim ( \R^q \times N)$. 

It is proved in \cite{Kou} that $\bar{P}^0$ acts isometrically ($\bar{P}^0 \subset \Iso(N)$) and that $\bar{P}^0$ is abelian. The orbits
of $Q$, when equipped with their induced metric,  are thus homogeneous under the action of an  abelian 
group and are thus flat.

Let $f = (A, B) \in \pi_1(M)$ and assume it preserves an orbit $\R^q x  \times \bar{P}^0 y$. Up to composition with the inverse of an element $(a, b) \in \R^q \times \bar{P}^0$, we get $By = a y$, and thus $B \in \bar{P}^0$ since $\Sim(N)$ acts freely. From the fact that  $\bar{P}^0$ acts isometrically, we infer  that $f \circ a^{-1}\in \Iso (\R^q) = \O(q) \ltimes \R^q$.  So $A(x) = R(x) + u$, for some $R \in \O(q), u \in \R^q$. From this, one deduces that $f$ preserves any orbit $\R^q x^\prime \times \bar{P}^0 y^\prime$. Now, by Bieberbach Theorem, $\Gamma_0 = \pi_1(M) \cap (\R^q \times \bar{P}^0)$ is a lattice in $\R^q \times \bar{P}^0$, and all rotational parts $R$ belong to a finite group $H$ (the holonomy group). 
 
 Observe that  $\bar{P}^0$ is normal in $\bar{P}$, and hence $P \cap \bar{P}^0$ is normal in $P$, and so $\pi_1(M) \cap (\R^q \times \bar{P}^0)$, which equals $\Gamma_0$ is normal in $\pi_1(M)$. We thus have a representation by conjugacy $\rho: \pi_1(M) \to \Aut(\Gamma_0)$.  Since $\pi_1(M)$ is finitely generated, by Selberg Lemma, we can replace $\pi_1(M)$ by a finite index subgroup such that $\rho (\pi_1(M))$ is torsion free.  Observe that for $f = (A, B)$ as above, $\rho(f)$ coincides with the $R$-action on $\Gamma_0$. It has finite order and acts trivially on $\Gamma_0$ iff $R $ is trivial.  We deduce that in $M$,  all orbits are torii.

Finally, we observe that, in the general case, too, where $\bar{P}$ might (a priori) act non-freely, and after this finite index modification of $\pi_1(M)$,  we have a foliation by torii. Indeed, by the first step applied to $\hat{M}$, we know that $\pi_1(M)$ meets $  (\R^q \times \bar{P}^0)$ along a lattice $\Gamma_0$, and we will still assume $\pi_1 (M)$ acts by conjugacy on $\Gamma_0$, whithout torsion.  If $f \in \pi_1(M)$ preserves some orbit $\R^q  x\times \bar{P}^0 y$, then it induces an isometry acting trivially on $\Gamma_0$. Since $\Gamma_0$ projects densely on $\bar{P}^0$, $f$ must have a trivial linear part. This is, $f$ acts as a translation, i.e. as an element of $\R^q \times \bar{P}^0$,  on  the orbit $\R^q  x\times \bar{P}^0 y$. So the orbit goes down to a torus in $M$. (A priori, $f$ does not necessarily belong to $\R^q \times \bar{P}^0$).
  
This whole discussion proves Theorem~\ref{FoliationToriiThm}.

\end{document}